\def\beq{\begin{equation}}
\def\eeq{\end{equation}}
\def\baq{\begin{eqnarray}}
\def\eaq{\end{eqnarray}}
\def\baqn{\begin{eqnarray*}}
\def\eaqn{\end{eqnarray*}}
\newcommand{\ball}{\mathbb{B}}
\theoremstyle{plain}
\newtheorem{remark}{Remark}
\newtheorem{example}{Example}
\newtheorem{theorem}{Theorem}
\newtheorem{lemma}[theorem]{Lemma}
\newcommand{\R}{{\mathbb R}}
\newcommand{\interior}{{\rm int}\kern 0.06em}
\def\<{\langle}
\def\>{\rangle}
\renewcommand*{\backrefalt}[4]{%
\ifcase #1 %
(Not cited)%
\or
(Cited on p.~#2)%
\else
(Cited on pp.~#2)%
\fi
}
\begin{document}
\title{Sliding Mode Observer for Set-valued Lur'e Systems and Chattering Removing}
\author{Samir Adly\thanks{Laboratoire XLIM, Universit\'e de Limoges,
123 Avenue Albert Thomas,
87060 Limoges CEDEX, France\vskip 0mm
Email: \texttt{samir.adly@unilim.fr}} \qquad Ba Khiet Le \thanks{Optimization Research Group, Faculty of Mathematics and Statistics, Ton Duc Thang University, Ho Chi Minh City, Vietnam\vskip 0mm
 E-mail: \texttt{lebakhiet@tdtu.edu.vn}}
 }
\date{}
\maketitle

\begin{abstract}
{In this paper, we study} a sliding mode observer for a class of set-valued Lur'e systems subject to uncertainties. 
{We show the well-posedness of the problem and highlight the clear advantages of our approach over the existing Luenberger-like observers.} {Furthermore, we provide a new continuous approximation to {remove} the chattering effect in the sliding mode technique.} {Some numerical examples are given to illustrate our theoretical approach.}
\end{abstract}

{\bf Keywords.} Sliding mode observer; set-valued Lur'e systems; chattering effect\\

{\bf AMS Subject Classification.} 28B05, 34A36, 34A60, 49J52, 49J53, 93D20

\section{Introduction}
{Hybrid systems are a class of dynamic systems that may present both continuous and discrete behavior. They are characterized by the presence of continuous state variables, inputs/outputs, as well as discrete state variables, inputs/outputs. On the other hand, Lur'e-type nonlinear systems \cite{Lurie} are represented by the combination of a linear time-invariant system and a memoryless nonlinearity through a feedback connection.
%
%
The link between Lur'e systems and hybrid systems lies in the coupling of the continuous and discrete dynamics through the nonlinear state equation and the linear output equation. This coupling makes Lur'e systems well-suited for modeling complex physical, biological, or social systems that exhibit both continuous and discrete behavior, and have found applications in areas such as control systems, signal processing, and system identification. For a comprehensive guide to hybrid dynamical systems from the modeling, stability and robustness point of view, we refer to \cite{GST}. In this paper we focus on Lur'e set-valued dynamical systems where the nonlinear feedback is given by a set-valued relation.} {The Lur'e set-valued dynamical system is a widely studied model in applied mathematics and control theory. In recent decades, it has received significant attention, as evidenced by the numerous studies found in references such as \cite{Acary,ahl,ahl2,bg,bg2,brogliato,bh,bt,cs,Huang,Huang1,Huang3,L1,L2,L3,abc0}. Despite advancements in the understanding of the system's existence and uniqueness, stability and asymptotic analysis, the design of control and observers still poses many interesting open questions. Specifically, most observers for Lur'e set-valued systems follow the Luenberger design, which has limitations when the system is subjected to uncertainty, as discussed in references such as \cite{Huang,Huang1,Huang3,abc0}.}
Recently, using the powerful sliding mode technique, B. K. Le in \cite{L3} proposed a 
 sliding mode observer for a general class of set-valued Lur'e systems subject to uncertainties as follows
\beq
\left\{
\begin{array}{l}
   \dot{x}(t) = Ax(t)+B\lambda(t)+Eu(t)+G\xi(t,u,x)\; {\rm for\; a.e.} \; t \in [0,+\infty), \label{1a}\\ \\
   w(t)=Cx(t)+D\lambda(t),\\ \\
   \lambda(t) \in -\mathcal{F}_t(w(t)), \;t\ge 0,\\ \\
   y(t)=Fx(t), \\ \\
   x(0) = x_0,
\end{array}\right.
\eeq
where      $ A\in \R^{n\times n}, B \in \R^{n\times m},C\in \R^{m\times n}, D\in \R^{m\times m}, E\in \R^{n\times l}, F\in \R^{p\times n}, G\in \R^{n\times k}$ are given matrices, $\lambda$ and $w $ are two  connecting  variables, $u$ is the control input, $y$ is the physically measurable output and $\xi$ is some uncertainty.  
{The set-valued operator $\mathcal{F}_t:\R^m\rightrightarrows\R^m$ is time-dependent and assumed to be maximal monotone. }
Note that  $(1)$ can be rewritten into a first order time-dependent differential inclusion as follows
\beq\label{equidi}
 \dot{x} \in  Ax-B(\mathcal{F}_t^{-1}+D)^{-1}Cx+Eu+G\xi, \;\;x(0)=x_0.
 \eeq
If $D=0$ and $\mathcal{F}_t=\mathcal{F}$, we obtain the {following classical case}
 $$
  \dot{x} \in  Ax-B\mathcal{F}Cx+Eu+G\xi.
 $$
The proposed sliding mode observer for $(1)$ is 

\beq
\left\{
\begin{array}{l}
   \dot{\tilde{x}} \in  A\tilde{x}+B\tilde{\lambda}-Le_y+Eu-P^{-1}F^T\Psi(e_y),\\ \\
   \tilde{w}=C\tilde{x}+D\tilde{\lambda},\\ \\
   \tilde{\lambda} \in -\mathcal{F}(\tilde{w}+Ke_y),\\ \\
   \tilde{y}=F\tilde{x},
\end{array}\right.
\eeq
where
$$
e=\tilde{x}-x, e_y=\tilde{y}-y=Fe,
$$
\baq\label{setv}\nonumber
\Psi(e_y)=\sigma_1 e_y+ (\Vert J \Vert\ \rho(t,u,y)
+\sigma_2){\rm Sign}(e_y)
\eaq
and $\rho$ is a bound of the uncertainty for some suitable matrix $J$ and real number $\sigma_1\ge 0, \sigma_2>0$. Then under some mild conditions, the observer state converges to the original state asymptotically and the observation error converges to zero in finite time ({we refer to \cite{L3} for more details}). 

   In this paper, our first contribution is to provide a sliding mode observer for the system considered in \cite{Huang} as follows 
\beq \left\{
\begin{array}{lll}
\; \dot{x}=Ax +B\omega +f_1(x,u)+f_2(x,u)\theta(t),\\
&&\\
\;\omega \in -\mathcal{F}(Cx), \\
&&\\
\; y=Fx,
\end{array}\right. \label{sysh}
\eeq
where $A\in \R^{n\times n}, B\in \R^{n\times m}, C\in \R^{m\times n}, F\in \R^{p\times n}$ are given matrix, $x\in \R^n$ is the state, $\mathcal{F}: \R^m\rightrightarrows  \R^m$ is a maximal monotone operator,  $u\in \R^r$ is the control input and $y\in \R^p$ is the measurable output. The functions $f_1,f_2$ are known smooth  while $\theta\in \R^l$ is an unknown  constant parameter vector. An adaptive Luenberger-like observer was proposed by  Huang et al in \cite{Huang}. 
{In our paper, the parameter $\theta$  may not always be constant, such as in the case of perturbations. Our sliding mode approach is efficient and straightforward, without the need to solve an additional ordinary differential equation, as seen in \cite{Huang}. Our results show that the convergence of the observer state to the actual state is exponential, while the rate of convergence in \cite{Huang} is unknown. Additionally, if the matrix function $f_2$ is bounded, the conditions for solvability of the associated LMI are significantly improved, leading to finite time convergence of the observation error under more favorable assumptions than in \cite{L3}. Our contributions also extend to the reduced-order case, where we have improved conditions. Lastly, we propose a new smooth approximation of the sliding mode technique that removes the chattering effect while still effectively managing uncertainty.}

{The paper is organized as follows. In Section 2, we revisit some key concepts. {In Section 3, we establish the well-posedness of the problem, addressing the existence and uniqueness of a solution. Additionally, we propose a sliding mode observer for (\ref{sysh}) and show its significant advantages over the Luenberger-like observer discussed in \cite{Huang}}. Section 4 focuses on the reduced-order observer. In Section 5, we present a new and refined version of the sliding mode technique. Numerical examples to support the results are provided in Section 6. Finally, Section 7 concludes the paper and highlights future possibilities.}
\section{{Notations} and mathematical background}
We denote the  scalar product and the corresponding norm of Euclidean spaces  by $\langle\cdot,\cdot\rangle$ and $\|\cdot\|$  respectively. 
A matrix $P\in \R^{n\times n}$  is called positive definite,  written  $P> 0$, if there exists $a>0$ such that 
$$
\langle Px,x \rangle \ge a\|x\|^2,\;\;\forall\;x\in {\R^n}.
$$
The sign  and Sign functions in $\R^n$ are defined by 
$$
{\rm sign}(x)=({\rm sign}(x_1), {\rm sign}(x_2), \ldots, {\rm sign}(x_n))
$$
and
$$
{\rm Sign}(x)= \left\{
\begin{array}{l}
\frac{x}{\Vert x\Vert} \;\;\;\;{\rm if}\;\; \;\;x\neq 0\\ \\
\ball \;\;\;\;\;\;\; {\rm if}\;\;\;\;\;x=0,
\end{array}\right.
$$
where $\ball$ denotes the unit ball in $\R^n$. Both functions coincides in $\R$, which are widely used in sliding mode technique (see, e.g., \cite{Orlov,Shtessel} and the references therein). Sign function is usually used in Walcott and Zak observer
 (see, e.g., \cite{Spurgeon,Wz,Xiang}). It seems to be inherited from the Filippov approach to the meaning of differential equations with discontinuous right-hand side.
 \begin{figure}[h!]
\begin{center}
\includegraphics[scale=0.7]{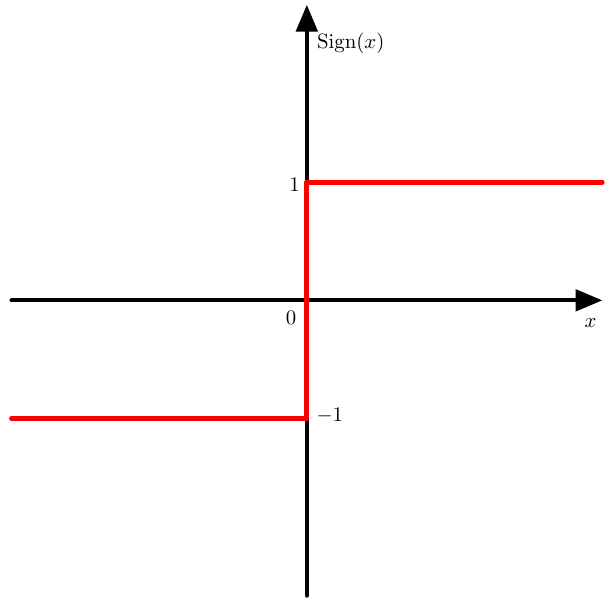}
\caption{Sign function in $\R$}
\end{center}
\label{luref}
\end{figure}

A set-valued mapping $\mathcal{F}: \R^m \rightrightarrows \R^m$ is called $\it{monotone}$ if for all $x,y\in \R^m$ and $x^*\in \mathcal{F}(x),y^*\in \mathcal{F}(y)$, one has 
$$\langle x^*-y^*,x-y\rangle \ge 0.$$ 
 Furthermore, $\mathcal{F}$  is called $\it{maximal \;monotone}$ if there is no monotone  operator $\mathcal{G}$ such that the graph of $\mathcal{F}$ is contained strictly in  the graph of $\mathcal{G}.$\\

\begin{lemma}\label{cct} \cite{L3}
Let $F\in \R^{p\times n}$ be a full row rank matrix ($p\le n$) and $P\in \R^{n\times n}$ be a symmetric positive definite matrix. If $x\in {\rm im}(P^{-1}F^T) $ then 
\beq
F^T(FP^{-1}F^T)^{-1}Fx=Px,
\eeq
where ${\rm im}(A) $ denotes the range of $A$.
\end{lemma}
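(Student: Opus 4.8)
The plan is to combine two facts: the explicit parametrization of the subspace ${\rm im}(P^{-1}F^T)$ and the invertibility of the $p\times p$ matrix $FP^{-1}F^T$. First I would check that $FP^{-1}F^T$ is indeed invertible, so that the expression in the statement is well defined. Since $P$ is symmetric positive definite, so is $P^{-1}$, and hence for any $v\in\R^p$ one has $\langle FP^{-1}F^Tv,v\rangle=\langle P^{-1}(F^Tv),F^Tv\rangle\ge 0$, with equality only when $F^Tv=0$. Because $F$ has full row rank $p\le n$, its transpose $F^T$ has full column rank, so $F^Tv=0$ forces $v=0$. Therefore $FP^{-1}F^T$ is positive definite and in particular invertible, which is exactly what is needed for $(FP^{-1}F^T)^{-1}$ to make sense.

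Next I would use the hypothesis $x\in{\rm im}(P^{-1}F^T)$, which means there exists $z\in\R^p$ with $x=P^{-1}F^Tz$. From this single substitution everything follows by direct computation: multiplying on the left by $P$ gives $Px=F^Tz$, while applying $F$ gives $Fx=FP^{-1}F^Tz$. Feeding the latter into the claimed identity yields
\[
F^T(FP^{-1}F^T)^{-1}Fx=F^T(FP^{-1}F^T)^{-1}(FP^{-1}F^T)z=F^Tz=Px,
\]
where the middle equality uses the invertibility established in the first step. This closes the argument.

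There is essentially no hard part here; the only point requiring care is the invertibility of $FP^{-1}F^T$, and the proof of that reduces to the elementary observation that full row rank of $F$ makes the quadratic form $v\mapsto\langle P^{-1}(F^Tv),F^Tv\rangle$ strictly positive off the origin. Once invertibility is in hand, the identity is purely algebraic and needs no further structural assumptions beyond $x$ lying in ${\rm im}(P^{-1}F^T)$.
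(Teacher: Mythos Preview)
Your argument is correct and complete: the invertibility of $FP^{-1}F^T$ follows exactly as you say from the positive definiteness of $P^{-1}$ together with the full column rank of $F^T$, and once that is established the identity is an immediate substitution using $x=P^{-1}F^Tz$. The paper itself does not supply a proof of this lemma---it is quoted from \cite{L3}---so there is no in-paper argument to compare against; your proof is the natural direct one and nothing further is needed.
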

Finally, let us recall the concept of orbital derivative. Let $x:[0,+\infty)\to \R^n$ be an absolutely continuous function, $V:  \R^n\to \R$ and $W(t):=V(x(t))$. Then the orbital derivative of $V$ along $x(\cdot)$ is defined by: $\dot{V}=\frac{dW}{dt}$.
\section{Well-posedness and the convergence analysis}
In this section, we will propose a sliding mode observer for the system (\ref{sysh}) under the following assumptions:\\

\noindent \textbf{Assumption 1: } The set-valued operator $\mathcal{F}: \R^n\rightrightarrows \R^n$ is a  monotone, upper semi-continuous with non-empty, closed convex and bounded values. \\

\noindent \textbf{Assumption 2: } The  continuous functions $f_1:\R^n\times \R^r\to \R^n$ and $f_2: \R^n\times \R^r\to \R^{n\times l}$ are Lipschitz continuous w.r.t $x$, i. e., there exist $L_1>0$ and $L_2>0$ such that for all $x_1,x_2\in \R^n, u\in \R^r$, we have 
$$
\Vert f_1(x_1,u)-f_1(x_2,u)\Vert \le L_1 \Vert x_1-x_2\Vert \;\;{\rm and}\;\;\Vert f_2(x_1,u)-f_2(x_2,u)\Vert \le L_2 \Vert x_1-x_2\Vert.
$$
\noindent \textbf{Assumption 3: } The unknown $\theta(t)$ is continuous and bounded by $L_3>0$.  \\

\noindent \textbf{Assumption 4: } Let $\gamma=L_1+L_2L_3$. There exist $\epsilon>0$, $P\in \R^{n\times n}>0$, $L\in \R^{n\times p}$, $K\in \R^{m\times p}$ and the matrix function $h: \R^n\times \R^r\to \R^{l\times p}$ such that 
\baq\label{ine}
&&P(A-LF)+(A-LF)^TP+\gamma P^2+\gamma I+\epsilon I \le 0,\\\label{pbc}
&& B^TP=C-KF,\\
&& f_2^T(x,u)P=h(x,u)F.\label{f2p}
\eaq
\noindent \textbf{Assumption 4': } the same to Assumption 4 but with $\gamma$ is replaced by $\gamma'=L_1$.
\begin{remark}\label{r1}\normalfont
i) From $(\ref{f2p})$, we have $f_2(x,u)=P^{-1}F^T h^T(x,u)$. It means that ${\rm im}(f_2(x,u))\in {\rm im}(P^{-1}F^T)$ for all $(x,u)$. \\
ii) Assumption 4 is widely used in the Control literature, which is based on the passivity and the matching property of disturbances (see, e.g., \cite{bh,Huang,L3}). 
\end{remark}
\begin{lemma}\label{lbh}
Let  $F\in \R^{p\times n}$ be a matrix with full row rank.  If $f_2$ is bounded, then $h$ is also bounded. Furthermore, if $f_2$ is Lipschitz continuous with respect to \;$x$ then $h$ is also Lipschitz continuous
with respect to $x$.
\end{lemma}
\begin{proof}
From Remark \ref{r1}, we imply that $h(x,u)=(FP^{-1}F^T)^{-1}Ff_2(x,u)$ and the condition follows. 
\end{proof}
\noindent The proposed sliding mode observer for (\ref{sysh})  is 
\beq \left\{
\begin{array}{lll}
\; \dot{\tilde{x}}=A\tilde{x} +B\tilde{\omega} -Le_y+f_1(\tilde{x},u)-\beta P^{-1}F^T\Vert h(\tilde{x},u)\Vert {\rm Sign}(e_y),\\
&&\\
\;\tilde{\omega} \in -\mathcal{F}(C\tilde{x}-Ke_y), \\
&&\\
\; \tilde{y}=F\tilde{x},
\end{array}\right.
\label{obs}
\eeq
where $\beta\ge L_3$. 
In the following theorem, we establish the well-posedness of the problem by proving the existence
and uniqueness of solutions for both the original system (\ref{sysh}) and the sliding mode observer (\ref{obs}),
under Assumptions 1--4.

\begin{theorem}\label{existence}
Suppose Assumptions   1--4 are satisfied. Then,
the following statements hold:\\
(i) The original system given by  (\ref{sysh}) and the sliding mode observer  given by (\ref{obs})
 both possess solutions.\\
(ii) If  $F$  is a matrix of full row rank, then the solution to the sliding mode observer (\ref{obs}) is unique.\\
(iii) If there exists a symmetric matrix $Q>0$ such that $B^TQ=C$, then the solutions of (\ref{sysh})   is unique.
\end{theorem}
\begin{proof}
(i)  The existence of solutions of both systems follows immediately since both systems can be reduced into first order differential inclusions where the right-hand side operators are upper semi-continuous and have non-empty,  convex and compact values (see for instance \cite{Ac,br,Huang}). 

(ii) Let $\tilde{x}_1$, $\tilde{x}_2$ be two solutions of $(\ref{obs})$ and $x$ is a solution of (\ref{sysh}). Let $\tilde{e}(t):=\tilde{x}_1(t)-\tilde{x}_2(t), \tilde{e}_{y1}:=F(\tilde{x}_1-x), \tilde{e}_{y2}:=F(\tilde{x}_2-x)$. Consider the Lyapunov function $V_1(\tilde{e})=\frac{1}{2}\langle P\tilde{e}, \tilde{e}\rangle$ and compute its orbital derivative
\baqn
\dot{V_1}&=&\langle P\tilde{e},\dot{\tilde{e}}\rangle\\
&=&\langle P\tilde{e},A\tilde{e}+B(\tilde{\omega}_1-\tilde{\omega}_2)+f_1(\tilde{x}_1,u)-f_1(\tilde{x}_2,u)\\
&-& \beta P^{-1}F^T(\Vert h(\tilde{x}_1,u)\Vert {\rm Sign}(\tilde{e}_{y1})-\Vert h(\tilde{x}_2,u)\Vert {\rm Sign}(\tilde{e}_{y2}))\rangle
\eaqn
The following inequalities hold true
$$
\langle P\tilde{e}, B(\tilde{\omega}_1-\tilde{\omega}_2)\rangle=\langle B^TP\tilde{e}, \tilde{\omega}_1-\tilde{\omega}_2\rangle=\langle (C-KF)\tilde{e}, \tilde{\omega}_1-\tilde{\omega}_2\rangle\le 0,
$$
$$
\langle P\tilde{e},A\tilde{e}+f_1(\tilde{x}_1,u)-f_1(\tilde{x}_2,u)\rangle \le (\Vert PA\Vert+L_1)\Vert \tilde{e} \Vert^2.
$$
Further calculations yield
\baqn
\Gamma:=&-&\langle P\tilde{e},  P^{-1}F^T(\Vert h(\tilde{x}_1,u)\Vert {\rm Sign}(\tilde{e}_{y1})-\Vert h(\tilde{x}_2,u)\Vert {\rm Sign}(\tilde{e}_{y2}))\rangle\\
&=& -\langle F\tilde{e},\Vert h(\tilde{x}_1,u)\Vert {\rm Sign}(\tilde{e}_{y1})-\Vert h(\tilde{x}_2,u)\Vert {\rm Sign}(\tilde{e}_{y2})\rangle\\
&=&- \Vert h(\tilde{x}_1,u)\Vert \langle F\tilde{e}, {\rm Sign}(\tilde{e}_{y1})-{\rm Sign}(\tilde{e}_{y2})\rangle-\langle F\tilde{e}, (\Vert h(\tilde{x}_1,u)\Vert -\Vert h(\tilde{x}_2,u)\Vert){\rm Sign}(\tilde{e}_{y2})\rangle\\
&\le & \Vert F \Vert L_h \Vert \tilde{e} \Vert^2,
\eaqn
where $L_h$ is the Lipschitz constant of $h$ (Lemma \ref{lbh}) and we have used the monotonicity of the ${\rm Sign}$ function. Therefore $\dot{V_1}\le \rho_1 \Vert \tilde{e} \Vert^2$ where $\rho_1:=\Vert PA\Vert+L_1+L_h \Vert F \Vert$, allowing us to draw the
conclusion by using the Gronwall's inequality.

(iii)  Let $x_1$ and $x_2$ be two solutions of $(\ref{sysh})$. Define
the error $e(t)$ as $e(t):=x_1(t)-x_2(t)$. We choose  the Lyapunov function $V_2(e)=\frac{1}{2}\langle Qe, e\rangle$ and
compute its orbital derivative
\baqn
\dot{V_2}&=&\langle Qe,\dot{e}\rangle\\
&=&\langle Qe,Ae+B(\omega_1-\omega_2)+f_1(x_1,u)-f_1(x_2,u)\\
&+& f_2(x_1,u)\theta(t)-f_1(x_2,u)\theta(t)\rangle.
\eaqn
By using the monotonicity of  $\mathcal{F}$, we obtain 
$$
\langle Qe,B(\omega_1-\omega_2) \rangle=\langle B^TQe,\omega_1-\omega_2\rangle=\langle Ce, \omega_1-\omega_2\rangle \le 0.
$$
Additionally, we have the following inequalities
$$
\Vert f_1(x_1,u)-f_1(x_2,u) \Vert \le L_1 \Vert e \Vert.
$$
\baqn
\Vert f_2(x_1,u)\theta(t)-f_2(x_2,u)\theta(t)\Vert\le L_2L_3 \Vert e \Vert.
\eaqn
Therefore, $\dot{V_2}\le \rho_2 \Vert e \Vert^2$ where $\rho_2:=\Vert QA\Vert+L_1+L_2L_3$ and  the conclusion follows.
\end{proof}
The following result confirms the exponential convergence of the observer state to the original state.
\begin{theorem}\label{th1}
Let Assumptions 1--4 hold. Then the observer state $\tilde{x}$ of (\ref{obs}) exponentially converges to the original state $x$ of (\ref{sysh}). 
\end{theorem}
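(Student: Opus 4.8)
The plan is to run a Lyapunov argument on the observation error $e=\tilde x-x$, using the quadratic function $V(e)=\langle Pe,e\rangle=e^TPe$ built from the matrix $P>0$ furnished by Assumption~4. First I would subtract (\ref{sysh}) from (\ref{obs}) and use $e_y=Fe$ to write the error dynamics
\begin{equation*}
\dot e=(A-LF)e+B(\tilde\omega-\omega)+\big[f_1(\tilde x,u)-f_1(x,u)\big]-\beta P^{-1}F^T\|h(\tilde x,u)\|\,\mathrm{Sign}(e_y)-f_2(x,u)\theta .
\end{equation*}
Since the right-hand side is set-valued (through $\mathcal F$ and through $\mathrm{Sign}$), the computation of $\dot V=2e^TP\dot e$ is to be read along solutions for a.e.\ $t$ (existence being already granted by the preceding remark), and I would estimate $\dot V$ one term at a time, aiming to match each piece to the corresponding block of the LMI (\ref{ine}).

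The two routine terms I would dispatch first. The linear part contributes $e^T\big[P(A-LF)+(A-LF)^TP\big]e$, which is precisely the quantity controlled in (\ref{ine}). For the set-valued feedback term, relation (\ref{pbc}) gives $PB=(C-KF)^T$, so $2e^TPB(\tilde\omega-\omega)=2\langle Ce-Ke_y,\tilde\omega-\omega\rangle$; observing that $-\tilde\omega\in\mathcal F(C\tilde x-Ke_y)$ and $-\omega\in\mathcal F(Cx)$ with argument difference $(C\tilde x-Ke_y)-Cx=Ce-Ke_y$, monotonicity of $\mathcal F$ (Assumption~1) forces this term to be $\le 0$.

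The crux, and the step I expect to be the main obstacle, is pairing the sliding-mode term with the uncertainty term, because they are evaluated at different states: the observer uses $h(\tilde x,u)$ while the uncertainty $-2e^TPf_2(x,u)\theta$ carries the true state $x$. The sliding-mode term gives $-2\beta\|h(\tilde x,u)\|\,e_y^T\mathrm{Sign}(e_y)$, which equals $-2\beta\|h(\tilde x,u)\|\,\|e_y\|$ when $e_y\ne0$ and vanishes when $e_y=0$ (there $e^TF^T=e_y^T=0$). To handle the mismatch I would split $f_2(x,u)\theta=f_2(\tilde x,u)\theta+[f_2(x,u)-f_2(\tilde x,u)]\theta$. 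For the first piece, (\ref{f2p}) and Remark~\ref{r1} yield $Pf_2(\tilde x,u)=F^Th^T(\tilde x,u)$, so $-2e^TPf_2(\tilde x,u)\theta=-2e_y^Th^T(\tilde x,u)\theta$, of magnitude at most $2\|e_y\|\,\|h(\tilde x,u)\|L_3$ by Assumption~3; added to the sliding-mode term this is $\le 2\|e_y\|\,\|h(\tilde x,u)\|(L_3-\beta)\le 0$ exactly because $\beta\ge L_3$. For the second piece, Assumptions~2 and~3 bound $\|[f_2(x,u)-f_2(\tilde x,u)]\theta\|\le L_2L_3\|e\|$.

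Finally I would collect the remaining Lipschitz contributions. The $f_1$ term is bounded by $2L_1\|Pe\|\,\|e\|$ (Assumption~2) and the $f_2$-remainder by $2L_2L_3\|Pe\|\,\|e\|$, so their sum is at most $2\gamma\|Pe\|\,\|e\|$ with $\gamma=L_1+L_2L_3$; Young's inequality $2\gamma\|Pe\|\,\|e\|\le\gamma\,e^TP^2e+\gamma\|e\|^2$ reproduces exactly the $\gamma P^2+\gamma I$ blocks of (\ref{ine}). Combining everything gives $\dot V\le e^T\big[P(A-LF)+(A-LF)^TP+\gamma P^2+\gamma I\big]e\le-\epsilon\|e\|^2$. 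Since $\lambda_{\min}(P)\|e\|^2\le V\le\lambda_{\max}(P)\|e\|^2$, this reads $\dot V\le-\tfrac{\epsilon}{\lambda_{\max}(P)}V$, and Grönwall's inequality yields exponential decay of $V$, hence of $\|e\|$, so $\tilde x\to x$ exponentially. The only delicate points to state carefully are the a.e.\ interpretation of $\dot V$ under the set-valued dynamics and the vanishing of the sliding-mode term at $e_y=0$; the rest is the bookkeeping above.
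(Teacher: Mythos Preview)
Your proposal is correct and follows essentially the same route as the paper's own proof: the same quadratic Lyapunov function $V(e)=\langle Pe,e\rangle$, the same monotonicity argument via (\ref{pbc}) for the $B(\tilde\omega-\omega)$ term, the same splitting $f_2(x,u)\theta=f_2(\tilde x,u)\theta+[f_2(x,u)-f_2(\tilde x,u)]\theta$ with (\ref{f2p}) to cancel the first piece against the sliding-mode term, and the same Young-type estimate $2\gamma\|Pe\|\,\|e\|\le \gamma e^TP^2e+\gamma\|e\|^2$ feeding into (\ref{ine}) before Gronwall. Your write-up is in fact slightly cleaner on the sign of the $B$ term and on the $e_y=0$ case.
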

\begin{proof}
Let $e=\tilde{x}-x, e_y=\tilde{y}-y=Fe$. From (\ref{sysh}) and (\ref{obs}), we have 
\baq\nonumber
\dot{e}&\in&(A-LF)e-B(\tilde{\omega}-{\omega})+f_1(\tilde{x},u)-f_1({x},u)\\
&-&\beta P^{-1}F^T\Vert h(\tilde{x},u)\Vert {\rm Sign}(e_y)-f_2(x,u)\theta. \label{der}
\eaq
Consider the Lyapunov function $V(e)=\langle Pe, e\rangle $. Then the orbital derivative of $V$ is 
\baq\nonumber
\dot{V}(e)&=&2\langle P\dot{e}, e\rangle=2\langle P(A-LF)e-PB(\tilde{\omega}-{\omega}), e\rangle\\
&+&2\langle P(f_1(\tilde{x},u)-f_1({x},u)), e \rangle -2\beta\Vert h(\tilde{x},u)\Vert \Vert e_y \Vert - 2\langle Pf_2(x,u)\theta,e\rangle.\label{dv}
\eaq
From (\ref{pbc}) and the monotonicity of $ \mathcal{F}$, we have 
\baq
\langle PB(\tilde{\omega}-{\omega}), e\rangle=\langle \tilde{\omega}-{\omega}), B^TP e\rangle
=\langle \tilde{\omega}-{\omega}, (C-KF) e\rangle\le 0. 
\label{mn}
\eaq
On the other hand
\baq
2\langle P(f_1(\tilde{x},u)-f_1({x},u)), e \rangle\le 2L_1 \Vert e\Vert \Vert Pe\Vert \le {L_1}(\Vert e\Vert ^2+\Vert Pe \Vert^2)\label{et1}
\eaq
and
\baq \nonumber
Y&:=&-2\beta\Vert h(\tilde{x},u)\Vert \Vert e_y \Vert - 2\langle Pf_2(x,u)\theta,e\rangle\\\nonumber
&=&-2\beta\Vert h(\tilde{x},u)\Vert \Vert e_y \Vert -  2\langle P(f_2(x,u)-f_2(\tilde{x},u))\theta,e\rangle-2\langle Pf_2(\tilde{x},u)\theta,e\rangle\\\nonumber
&\le&-2\beta\Vert h(\tilde{x},u)\Vert \Vert e_y \Vert +2L_2L_3 \Vert e\Vert \Vert Pe \Vert-2\langle \theta, h(\tilde{x},u)e_y\rangle \\
&\le&{L_2L_3}(\Vert e\Vert ^2+\Vert Pe \Vert^2).\label{et2m}
\eaq
Let $\alpha_{max}$ and $\alpha_{min}$ be the largest eigenvalue and the smallest  eigenvalues of $P$, respectively. 
From (\ref{ine}),  (\ref{dv}), (\ref{mn}), (\ref{et1}) and (\ref{et2m}), we have $$\frac{dV}{dt}\le-\epsilon \Vert e\Vert ^2 \le -\frac{\epsilon}{\alpha_{max}}V(t).$$
 Using Gronwall's inequality, we obtain 
$$
{\alpha_{min}\Vert e\Vert ^2}\le V(t)\le {\rm exp}({\frac{-\epsilon t}{\alpha_{max}}})V(t_0).
$$
Hence, $\Vert e \Vert \le {\rm exp}({\frac{-\epsilon t}{2\alpha_{max}}})\sqrt{\frac{V(t_0)}{{\alpha_{min}}}}$ and the conclusion follows. 
\end{proof}
\begin{remark}\normalfont
{(i) According to \cite{Huang}, under similar conditions, our implementation of a sliding mode observer leads to an exponential convergence of the observer state, whereas the adaptive observer in the same study only achieves convergence without a defined rate. Furthermore, our method avoids the need to solve a high-dimensional additional ODE, which can be computationally expensive, as seen in \cite{Huang}.}\\
{(ii) In Assumption 4, if $h$ is bounded, then $\gamma=L_1+L_2L_3$ can be replaced by just $L_1$, which is usually much smaller than $\gamma$. This makes it easier to satisfy (\ref{ine})-(\ref{f2p}) and enables us to solve a wider class of set-valued Lur'e systems that cannot be tackled using the method in \cite{Huang}. Furthermore, our technique also ensures finite time convergence of the observation error $e_y$ to zero. A bounded matrix function $h$ can be guaranteed if $f_2$ is bounded and $F$ is full row rank (Lemma \ref{lbh}), as one can always choose a full row rank matrix $F$ for the output $y=Fx$ without losing any information.}
\end{remark}
\begin{theorem}\label{tm2}
 Let Assumptions 1, 2, 3, 4' hold and suppose that $h(x,u)$ is upper-bounded by some constant $L_4>0$. Then we  can still ensure the exponential convergence to the original state by using the  observer 
\beq \left\{
\begin{array}{lll}
\; \tilde{x}=A\tilde{x} +B\tilde{\omega} -Le_y+f_1(\tilde{x},u)+\beta P^{-1} F^T{\rm Sign}(e_y),\\
&&\\
\;\tilde{\omega} \in -\mathcal{F}(C\tilde{x}-Ke_y), \\
&&\\
\; \tilde{y}=F\tilde{x} 
\end{array}\right.
\label{obsn}
\eeq
where $\beta > L_3L_4 $. Furthermore,  if  the matrix $F\in \R^{p\times n}$ has full row rank  and ${\rm im}(B)\subset {\rm im}(P^{-1}F)$ then  the observation error $e_y$ converges in explicitly finite time. 
\end{theorem}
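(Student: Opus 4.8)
The plan is to reproduce the Lyapunov computation of Theorem~\ref{th1} almost verbatim for the exponential part, the single decisive change being the way the uncertainty term is absorbed. Set $e=\tilde{x}-x$, $e_y=Fe$ and $V(e)=\langle Pe,e\rangle$. Writing the error dynamics from (\ref{sysh})--(\ref{obsn}) and differentiating $V$ along the trajectory, the monotonicity term is disposed of exactly as in (\ref{mn}) (it is $\le 0$ by (\ref{pbc}) and monotonicity of $\mathcal{F}$), and the $f_1$-term is bounded by $L_1(\Vert e\Vert^2+\Vert Pe\Vert^2)$ as in (\ref{et1}). Thus, up to the uncertainty/sign contribution, $\dot V$ is controlled with a coefficient $L_1$ rather than $\gamma=L_1+L_2L_3$ in front of $P^2+I$.

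The point where boundedness of $h$ enters is that the sign term of (\ref{obsn}) carries no factor $\Vert h\Vert$, so it contributes exactly $-2\beta\Vert e_y\Vert$ to $\dot V$; meanwhile, using $Pf_2(x,u)=F^{T}h^{T}(x,u)$ (the transpose of (\ref{f2p})), the uncertainty term becomes $-2\langle Pf_2(x,u)\theta,e\rangle=-2\langle\theta,h(x,u)e_y\rangle\le 2L_3L_4\Vert e_y\Vert$. Since $\beta>L_3L_4$, these two terms together are $\le 0$ and therefore do not feed into the quadratic part at all. Consequently the reduced inequality (\ref{ine}) with $\gamma$ replaced by $L_1$ yields $\dot V\le -\epsilon\Vert e\Vert^2\le -\tfrac{\epsilon}{\alpha_{max}}V$, and Gronwall's inequality gives the exponential convergence precisely as in Theorem~\ref{th1}. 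This is why the Lipschitz constant $L_2$ of $f_2$ no longer enters the LMI.

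For the finite-time claim, observe first that $F$ full row rank together with $P^{-1}>0$ makes $M:=(FP^{-1}F^{T})^{-1}$ a well-defined symmetric positive definite $p\times p$ matrix. The image hypotheses are then used to \emph{match} the disturbance terms to the output: ${\rm im}(B)\subset{\rm im}(P^{-1}F^{T})$ gives $B=P^{-1}F^{T}\Phi$ for some $\Phi$, whence $FB=M^{-1}\Phi$ and, via (\ref{pbc}), $B^{T}P=\Phi^{T}F$; and Remark~\ref{r1} gives $f_2(x,u)=P^{-1}F^{T}h^{T}(x,u)$, whence $Ff_2(x,u)=M^{-1}h^{T}(x,u)$. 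I would then take $W=\tfrac{1}{2}\langle Me_y,e_y\rangle$ and differentiate along $\dot e_y=F\dot e$. The sign term gives $-\beta\langle e_y,{\rm Sign}(e_y)\rangle=-\beta\Vert e_y\Vert$; the uncertainty term gives $-\langle e_y,h^{T}(x,u)\theta\rangle\le L_3L_4\Vert e_y\Vert$; and the monotone term is $\le 0$ by the same argument as in (\ref{mn}), now reading $\langle\Phi(\tilde{\omega}-\omega),e_y\rangle\le 0$ because $B^{T}Pe=\Phi^{T}e_y$. The remaining \emph{unmatched} contributions $F(A-LF)e$ and $F(f_1(\tilde{x},u)-f_1(x,u))$ are bounded by $C\Vert e\Vert\,\Vert e_y\Vert$ for an explicit constant $C$.

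Collecting these estimates gives $\dot W\le\big[(L_3L_4-\beta)+C\Vert e\Vert\big]\Vert e_y\Vert$, and here lies the main obstacle: unlike the matched terms, the unmatched term $C\Vert e\Vert\,\Vert e_y\Vert$ is not killed by the image conditions, so $e_y$ need not decrease while $\Vert e\Vert$ is still large. The resolution is to invoke the exponential decay of $\Vert e\Vert$ already proved in the first part: there is an explicit time $T_0$ after which $C\Vert e(t)\Vert\le(\beta-L_3L_4)/2$, and then, using $\Vert e_y\Vert\ge\sqrt{2W/\lambda_{\max}(M)}$, one obtains the reaching inequality $\dot W\le -c\sqrt{W}$ with $c=\tfrac{\beta-L_3L_4}{2}\sqrt{2/\lambda_{\max}(M)}$. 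Integrating this differential inequality forces $e_y(t)\equiv 0$ for $t\ge T_0+2\sqrt{W(T_0)}/c$, an explicit finite time, which establishes the claim.
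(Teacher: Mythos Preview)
Your proposal is correct and follows essentially the same approach as the paper: the same Lyapunov function $V=\langle Pe,e\rangle$ with the uncertainty/sign terms cancelling via $Pf_2=F^{T}h^{T}$ for the exponential part, and the same weighted output energy $W=\tfrac{1}{2}\langle (FP^{-1}F^{T})^{-1}e_y,e_y\rangle$ with the unmatched terms eventually absorbed thanks to the exponential decay of $e$ for the finite-time part. The only cosmetic difference is that you handle the monotone term by the explicit factorization $B=P^{-1}F^{T}\Phi$, whereas the paper invokes Lemma~\ref{cct} to obtain the equivalent identity $F^{T}(FP^{-1}F^{T})^{-1}FB=PB$; the two formulations are the same computation.
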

\begin{proof}
Under the new assumption, in (\ref{et2m}) we have 
\baq \nonumber
-\beta \Vert e_y \Vert - \langle Pf_2(x,u)\theta,e\rangle
=-\beta\Vert e_y \Vert -\langle \theta, h({x},u)e_y\rangle 
\le -(\beta-L_3L_4)\Vert e_y\Vert \le 0 \label{et2}
\eaq
and similarly one obtains the exponential convergence of the observer state. 

To attain the  finite time convergence of the observation error, we consider the new Lyapunov function $W=\frac{1}{2}\langle e_y,(FP^{-1}F^T)^{-1} e_y \rangle$. Then 
$$
\frac{dW}{dt}=\langle (FP^{-1}F^T)^{-1} \dot{e}_y, e_y\rangle.
$$
 From (\ref{der}), we have 
\baq\nonumber
\dot{e}_y&=&F(A-LF)e-FB(\tilde{\omega}-{\omega})+F\Big(f_1(\tilde{x},u)-f_1({x},u)\Big)\\
&-&\beta FP^{-1}F^T {\rm Sign}(e_y)-Ff_2(x,u)\theta. \label{dery}
\eaq
Since ${\rm im}(B)\subset {\rm im}(P^{-1}F)$, using Lemma 1, we have 
$$
F^T(FP^{-1}F^T)^{-1}FB=PB
$$
and hence
\baqn
&&\langle(FP^{-1}F^T)^{-1}FB(\tilde{\omega}-{\omega}), e_y\rangle 
=\langle F^T(FP^{-1}F^T)^{-1}FB(\tilde{\omega}-{\omega}), e \rangle \\
&=&\langle PB(\tilde{\omega}-{\omega}), e \rangle =\langle \tilde{\omega}-{\omega}, B^TP e\rangle
=\langle \tilde{\omega}-{\omega}, (C-KF) e\rangle\le 0.
\eaqn
On the other hand, from (\ref{f2p}) we deduce that $f_2=P^{-1}F^T h^T$ and thus 
\beq\label{newt}
(FP^{-1}F^T)^{-1}Ff_2(x,u)\theta=(FP^{-1}F^T)^{-1}(FP^{-1}F^T)h^T(x,u)\theta=h^T(x,u)\theta.
\eeq
Since $\Vert e \Vert \le {\rm exp}({\frac{-\epsilon t}{2\alpha_{max}}})\sqrt{\frac{V(t_0)}{{\alpha_{min}}}}$ as in the proof of Theorem \ref{th1}, we can find some $t_1>0$ such that for all $t\ge t_1$, one has 
$$
\Vert (FP^{-1}F^T)^{-1}\Big (F(A-LF)e+F\big(f_1(\tilde{x},u)-f_1({x},u)\big)\Big)\Vert\le \frac{\sigma}{2}
$$
where $\sigma=\beta-L_3L_4>0$. Indeed, we can choose 
$$
t_1:=\frac{2\alpha_{max}}{\epsilon}\ln\Big(\frac{2\Vert (FP^{-1}F^T)^{-1}F\Vert(\Vert  A-LF\Vert+L_1)}{\sigma}\sqrt{\frac{V(t_0)}{{\alpha_{min}}}}\Big).
$$
Then for all $t\ge t_1$, we have 
\baqn
\frac{dW}{dt}&\le& \frac{\sigma}{2} \Vert e_y \Vert - (\beta -\Vert h^T(x,u)\theta \Vert) \Vert e_y \Vert\le \frac{\sigma}{2} \Vert e_y \Vert - (\beta -L_3L_4) \Vert e_y \Vert\\
&=&-\frac{\sigma}{2} \Vert e_y \Vert\le -\kappa  \sqrt{W(t)},
\eaqn
where $\kappa:=\sigma/\sqrt{2\gamma_{max}}$ and $\gamma_{max}$ is the largest eigenvalue of $(FP^{-1}F^T)^{-1}$ since $\Vert e_y \Vert\ge \sqrt{\frac{2W(t)}{{\gamma_{max}}}}.$ Suppose that $W(t)>0$ for all $t\ge t_1$ then we have
$$
\frac{W'}{2\sqrt{W}}\le -\frac{\kappa}{2}
$$
and thus
$$
\sqrt{W(t)}-\sqrt{W(t_1)}\le -\frac{\kappa}{2}(t-t_1)\to-\infty \;\;{\rm as}\;t\to \infty,
$$
a contradiction. Let $t_f\ge t_1$ be the first time such that $W(t_f)=0$, then we deduce that $W(t)=0$ for all $t\ge t_f$ since $W$ is non-negative and decreasing. It means that  $e_y$ converges to $0$ in finite time.  Similarly as above we have 
$$
-\sqrt{W(t_1)}=\sqrt{W(t_f)}-\sqrt{W(t_1)}\le -\frac{\kappa}{2}(t_f-t_1)
$$
and hence
$$
t_f\le t_1+2\sqrt{W(t_1)}/\kappa,
$$
which completes the proof of the theorem.
\end{proof}
\begin{remark}\normalfont
{(i)  Based on  the observation (\ref{newt}), we can have a significantly better estimation for the gain $\beta$ than in \cite{L3}. Indeed, in the current paper the gain $\beta$ does not change to obtain the  finite time convergence of the observation error from the  exponential convergence of the observer state. In addition, we can also provide an explicit estimation for $t_f$.}\\
{(ii) If Assumption 1 is substituted by the maximal monotone property of the set-valued $\mathcal{F}$, similar results can still be obtained. The existence and uniqueness of solutions for the original  system (\ref{sysh}) remains guaranteed, as shown in references such as \cite{Ac,br}. Existence  of solutions to the observer systems (\ref{obs}),  (\ref{obsn}) can  be also obtained by rewriting these systems into the form $\dot{x}\in -\mathcal{A}x+G(t,x)$ where $\mathcal{A}$ is a maximal monotone operator and $G$ is a upper semi-continuous set-valued function w.r.t $x$ with non-empty convex compact values (see, e.g., \cite{AB,bh}). The uniqueness can be proved similarly as the proof of Theorem \ref{existence}. It's worth mentioning that the normal cone operator $N_C$, which is associated to a nonempty closed convex set $C$, is an important maximal monotone set-valued mapping in mechanical and electrical engineering that does not satisfy  the boundedness requirement in Assumption 1.}
\end{remark}
\section{Reduced-order  observer}
Suppose that the given matrices and matrix-functions can be decomposed as follows
$$x=\left( \begin{array}{ccc}
x_1 \\ \\
x_2 
\end{array} \right), \;\;A=\left( \begin{array}{cc}
A_{11} &\;\; A_{12} \\ \\
A_{21} & \;\; A_{22}
\end{array} \right), B=\left( \begin{array}{ccc}
B_1 \\ \\
B_2 
\end{array} \right), C= (C_1\;\;C_2), F= (F_q \;\;0)
$$
$$
P=\left( \begin{array}{cc}
P_{11} &\;\; P_{12} \\ \\
P_{21} & \;\; P_{22}
\end{array} \right), \;\;f_1(x,u)=\left( \begin{array}{ccc}
f_{11}(x,u) \\ \\
f_{12}(x,u) 
\end{array} \right), \;\;f_2(x,u)=\left( \begin{array}{ccc}
f_{21}(x,u) \\ \\
f_{22}(x,u) 
\end{array} \right),
$$
where $F_q\in R^{q\times q}$ is an invertible matrix and the following is satisfied:\\

\noindent \textbf{Assumption 4'': }  There exist $\epsilon>0$, $Q\in \R^{(n-q)\times (n-q)}>0$,   $P_{22}\in \R^{(n-q)\times (n-q)}$ invertible, $P_{21}\in \R^{(n-q)\times q}$ such that 
\baq\label{inen}\nonumber
&&Q(A_{22}+KA_{12})+(A_{22}+KA_{12})^TQ+L_1 Q(KK^T+I_{n-q})Q\\
&&\;\;\;\;\;\;\;\;\;\;\;\;\;\;\;\;\;\;\;\;\;\;\;\;\;\;\;\;\;\;\;\;\;\;\;\;\;\;\;\;\;\;\;\;\;\;\;\;\;\;\;\;\;\;\;\;\;\;+(L_1+\epsilon) I_{n-q} \le 0,  \label{diss}\\
&& (B_2+KB_1)^TQ=C_2, \label{trans}\\
&&\Big( P_{21}\;\;P_{22}\Big)f_2(x,u)=0, \label{f2pn}
\eaq
where $K=P_{22}^{-1}P_{21}$.\\

Note that (\ref{sysh}) can be rewritten as follows
\beq \left\{
\begin{array}{lll}
\; \dot{x}_1=A_{11}x_1 +A_{12}x_2 +B_1\omega +f_{11}\Big(\left( \begin{array}{ccc}
x_1 \\ \\
x_2 
\end{array} \right),u\Big)+f_{21}\Big(\left( \begin{array}{ccc}
x_1 \\ \\
x_2 
\end{array} \right),u\Big)\theta\\
\; \dot{x}_2=A_{21}x_1 +A_{22}x_2 +B_2\omega +f_{12}\Big(\left( \begin{array}{ccc}
x_1 \\ \\
x_2 
\end{array} \right),u\Big)+f_{22}\Big(\left( \begin{array}{ccc}
x_1 \\ \\
x_2 
\end{array} \right),u\Big)\theta\\
&&\\
\;\omega \in -\mathcal{F}(C_1x_1+C_2x_2) \\
&&\\
\; y=F_qx_1 .
\end{array}\right. \label{syshdis}
\eeq
 Using (\ref{f2pn}), we have 
\beq \left\{
\begin{array}{lll}
\; \dot{z}&=&(A_{22}+KA_{12})z +(B_2+KB_1)\omega  +[(A_{21}+KA_{11})-(A_{22}+KA_{12})K]x_1\\
 &+&(K \;\;I_{n-q})f_1\Big(\left( \begin{array}{ccc}
x_1 \\ \\
z-Kx_1
\end{array} \right),u\Big)\\
&&\\
\;\omega &\in& -\mathcal{F}(C_2z+(C_1-C_2K)x_1), \\
&&\\
\; x_2&=&z-Kx_1.
\end{array}\right. 
\eeq
  The adaptive observer is 
\beq \left\{
\begin{array}{lll}
\; \dot{\tilde{z}}&=&(A_{22}+KA_{12})\tilde{z} +(B_2+KB_1)\tilde{\omega}  +[(A_{21}+KA_{11})-(A_{22}+KA_{12})K]x_1\\
 &+&(K \;\;I_{n-q})f_1\Big(\left( \begin{array}{ccc}
x_1 \\ \\
\tilde{z}-Kx_1 
\end{array} \right),u\Big)\\
&&\\
\;\tilde{\omega} &\in& -\mathcal{F}(C_2\tilde{z}+(C_1-C_2K)x_1), \\
&&\\
\; \tilde{x}_2 &=&\tilde{z}-Kx_1.
\end{array}\right. \label{obseu}
\eeq

\begin{theorem}\label{reduce}
Let Assumptions 1, 2, 3, 4'' hold. Then (\ref{obseu}) is a reduced-order observer of $(\ref{sysh})$, i.e., $\displaystyle{\lim_{t\to \infty}(x_2(t)-\tilde{x}_2(t))}=0$.
\end{theorem}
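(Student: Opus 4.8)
The plan is to reproduce the Lyapunov argument of Theorem~\ref{th1} in the reduced coordinate $z$, the decisive feature being that Assumption~4' makes the unknown parameter $\theta$ drop out of the error dynamics completely, so that no bound on $\theta$ and no sliding‑mode correction are required. First I would set $e=\tilde z-z$ and note that, since $\tilde x_2=\tilde z-Kx_1$ and $x_2=z-Kx_1$, we have $x_2-\tilde x_2=z-\tilde z=-e$; hence it suffices to show $e(t)\to 0$.

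Next I would derive the error equation by subtracting the reduced $z$-system from the observer~(\ref{obseu}). Because the observer is driven by the \emph{exactly known} quantity $x_1=F_q^{-1}y$ (here $F_q$ is invertible), every term carrying $x_1$ cancels in the subtraction, leaving
\[
\dot e=(A_{22}+KA_{12})e+(B_2+KB_1)(\tilde\omega-\omega)+(K\ I_{n-q})\left[f_1\!\begin{pmatrix}x_1\\\tilde z-Kx_1\end{pmatrix}-f_1\!\begin{pmatrix}x_1\\z-Kx_1\end{pmatrix}\right].
\]
The key observation---and the whole purpose of condition~(\ref{f2pn})---is that the uncertainty term $(K\ I_{n-q})f_2\theta$ is already absent from both $\dot z$ and $\dot{\tilde z}$: since $K=P_{22}^{-1}P_{21}$ one has $(K\ I_{n-q})=P_{22}^{-1}(P_{21}\ P_{22})$, so $(K\ I_{n-q})f_2=P_{22}^{-1}(P_{21}\ P_{22})f_2=0$. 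Thus $\theta$ never enters the error system.

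Then I would take $V(e)=\langle Qe,e\rangle$ and differentiate along $\dot e$, treating the three resulting terms separately. The set‑valued term is nonpositive: by~(\ref{trans}), $(B_2+KB_1)^TQ=C_2$, so $\langle Q(B_2+KB_1)(\tilde\omega-\omega),e\rangle=\langle\tilde\omega-\omega,C_2e\rangle$, and since $-\tilde\omega\in\mathcal{F}(C_2\tilde z+(C_1-C_2K)x_1)$ and $-\omega\in\mathcal{F}(C_2z+(C_1-C_2K)x_1)$ with arguments differing exactly by $C_2e$, monotonicity of $\mathcal{F}$ gives $\langle\tilde\omega-\omega,C_2e\rangle\le 0$. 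For the $f_1$ term, the two arguments of $f_1$ differ only in their lower block by $e$, so Assumption~2 yields $\|\Delta f_1\|\le L_1\|e\|$, where $\Delta f_1$ denotes the bracketed difference; applying Young's inequality $2\langle (K\ I_{n-q})\Delta f_1,Qe\rangle\le L_1\|(K\ I_{n-q})^TQe\|^2+\tfrac{1}{L_1}\|\Delta f_1\|^2$ together with $(K\ I_{n-q})(K\ I_{n-q})^T=KK^T+I_{n-q}$ produces exactly $L_1\langle Q(KK^T+I_{n-q})Qe,e\rangle+L_1\|e\|^2$.

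Combining these with the drift term $\langle[Q(A_{22}+KA_{12})+(A_{22}+KA_{12})^TQ]e,e\rangle$ and invoking the LMI~(\ref{diss}), all matrix terms collapse and I expect
\[
\dot V\le-\epsilon\|e\|^2\le-\frac{\epsilon}{\lambda_{\max}(Q)}V,
\]
so Gronwall's inequality gives exponential decay of $V$, hence of $e$, and therefore $x_2-\tilde x_2=-e\to 0$. The principal step is conceptual rather than computational: recognizing that~(\ref{f2pn}) is precisely the algebraic condition annihilating the uncertainty in the $z$-coordinate, which turns the reduced error system into a clean monotone‑plus‑Lipschitz perturbation amenable to the same estimate as Theorem~\ref{th1}. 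The only genuinely technical care is selecting the Young's‑inequality weight so that the cross term matches the $L_1Q(KK^T+I_{n-q})Q$ appearing in~(\ref{diss}).
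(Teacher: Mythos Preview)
Your proposal is correct and follows essentially the same route as the paper: define $e_z=\tilde z-z$, take $V=\langle Qe_z,e_z\rangle$, use (\ref{trans}) together with the monotonicity of $\mathcal F$ to discard the set-valued term, bound the $f_1$ contribution via the Lipschitz estimate and the weighted Young inequality so as to produce $L_1Q(KK^T+I_{n-q})Q+L_1I_{n-q}$, and finish with (\ref{diss}). Your explicit explanation of why (\ref{f2pn}) annihilates the $\theta$ term (via $(K\ I_{n-q})=P_{22}^{-1}(P_{21}\ P_{22})$) is something the paper only alludes to when writing down the $z$-equation, but otherwise the arguments coincide.
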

\begin{proof}
Let $e_z=\tilde{z}-{z}$. Then we have
\beq \left\{
\begin{array}{lll}
\; \dot{e}_{z}&=&(A_{22}+KA_{12})e_{z} +(B_2+KB_1)(\tilde{\omega}-\omega) \\
 &+&(K \;\;I_{n-q})\Big[f_1\Big(\left( \begin{array}{ccc}
x_1 \\ \\
\tilde{z}-Kx_1 
\end{array} \right),u\Big)-f_1\Big(\left( \begin{array}{ccc}
x_1 \\ \\
{z}-Kx_1
\end{array} \right),u\Big)\Big]\\
&&\\
\;{\omega} &\in& -\mathcal{F}(C_2{z}+(C_1-C_2K)x_1).\\
&&\\
\;\tilde{\omega} &\in& -\mathcal{F}(C_2\tilde{z}+(C_1-C_2K)x_1).
\end{array}\right. 
\eeq
Let us consider the Lyapunov function $W(e_z)=\langle Q e_z,e_z \rangle$, then $\dot{W}(e_z)=2\langle Q \dot{e}_z,e_z \rangle$. 
From (\ref{trans}) and the monotonicity of $\mathcal{F}$, we have
$$
\langle Q(B_2+KB_1)(\tilde{\omega}-\omega), e_z\rangle=\langle\tilde{\omega}-\omega, (B_2+KB_1)^TQe_z\rangle=\langle\tilde{\omega}-\omega, C_2e_z\rangle\le 0.
$$
On the other hand
\baqn
&&\langle 2Q(K \;\;I_{n-q})\Big[f_1\Big(\left( \begin{array}{ccc}
x_1 \\ \\
\tilde{z}-Kx_1 
\end{array} \right),u\Big)-f_1\Big(\left( \begin{array}{ccc}
x_1 \\ \\
{z}-Kx_1
\end{array} \right),u\Big)\Big], e_z \rangle\\
&\le& 2L_1\Vert (K \;\;I_{n-q})^TQ e_z\Vert\Vert e_z\Vert\le L_1e_z^TQ(K \;\;I_{n-q}) (K \;\;I_{n-q})^TQe_z+L_1e_z^Te_z\\
&\le&L_1e_z^TQ(KK^T+I_{n-q})Qe_z+L_1e_z^Te_z.
\eaqn
Combining with (\ref{diss}), similarly as in the proof of Theorem \ref{th1}, we have $\dot{W}(e_z)\le -\epsilon \Vert e_z\Vert^2$. It deduces that $e_z$ converges to zero exponentially and the conclusion follows. 
\end{proof}
\begin{remark}\normalfont
(i)  Note that if we have  (\ref{f2p}), then $Pf_2(x,u)=F^Th(x,u)$ and thus \\$\Big( P_{21}\;\;P_{22}\Big)f_2(x,u)=0$. It would be interesting to improve or remove the condition (\ref{f2pn}). \\
(ii) When the matrices are decomposable, it is more effective to provide assumptions directly with the new lower-dimension matrices. Note that Assumption 4'' is strictly weaker than Assumption 4  even when $Q=P_{22}$ \cite{Huang}. It is remarkable that $Q>0$ can be different from $P_{22}$ and it is unnecessary to require that $P>0$ but only invertible $P_{22}$. {This enhancement significantly broadens the potential
applications. For simplicity one can choose  $P_{21}=0$, which would consequently result in $K=0$. It's noteworthy that  in Assumption 4'', only $L_1$  is involved, just as in Assumption 4' used for sliding mode observer (\ref{obsn}). However, one of the drawbacks of the reduced-order observer is the necessity to perform some
linear transformations  if $F$ is given in a general form.} {Additionally, despite satisfying Assumptions 1--3 and 4'', the numerical convergence of the reduced-order observer (\ref{obseu}) may fail in certain sensitive cases, as illustrated in Example 3, due to its reliance on the approximate output $x_1$ from the original system. In contrast, the sliding mode observer is known for its enhanced robustness.
}
\end{remark}
\section{A new continuous approximate of the sliding mode technique}
{Although the sliding mode method is effective, it has a persistent issue: the chattering effect caused by the discontinuity of the Sign function. To eliminate this issue, the Sign function in $\R$ is typically approximated by the ``sigmoid function"  (as noted in \cite{Shtessel})}
$$
 {\rm Sign(x)}\approx \frac{x}{\vert x \vert+\epsilon }
$$
 or by  \cite{Shokouhi}
$$
 {\rm Sign(x)}\approx \frac{x}{\sqrt{ x^2+\epsilon }}
$$
for some small fixed $\delta>0$. Other continuous approximates can be also found in \cite{Shokouhi}. 
 \begin{figure}[h!]
\begin{center}
\includegraphics[scale=0.2]{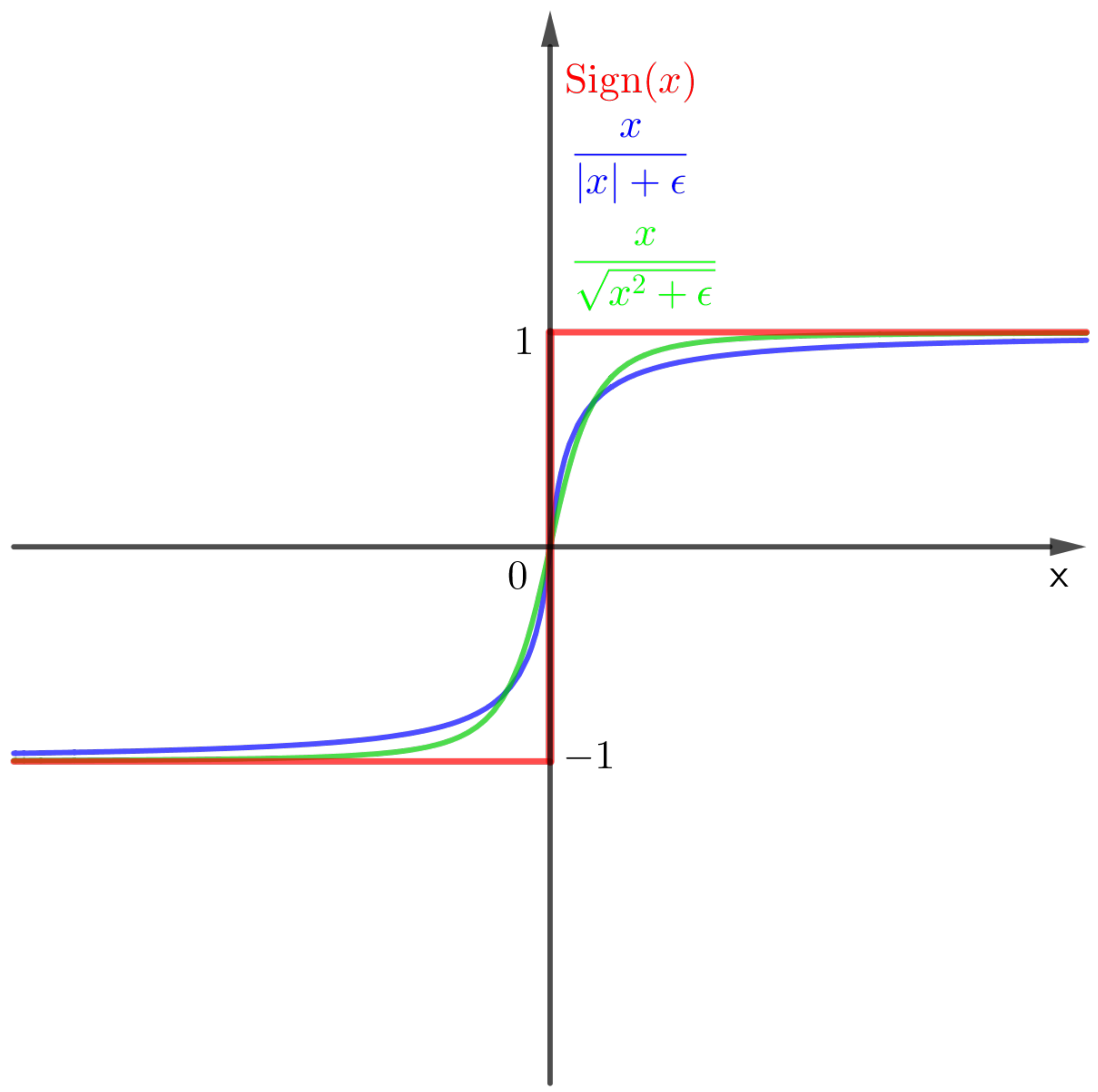}
\caption{Sign function in $\R$ and its approximations}
\end{center}
\label{luref}
\end{figure}

In this paper, we provide a new  smooth approximate of the set-valued function {\rm Sign} by using  a time-dependent {guiding} function. Given a continuous  {guiding}  function $\delta: \R^+\to\R^+$, we define the function ${\rm Sign}_\delta: \R^+ \times \R^n \to \R^n$ as follows 
  \begin{equation}
{\rm Sign}_\delta(t,x)=\left\{
\begin{array}{l}
\frac{x}{\Vert x\Vert}-\frac{1- {\Vert x\Vert}/{\delta(t)}}{(1+M\Vert x\Vert)^N}\frac{x}{\Vert x\Vert} \;\;{\rm if}\;\; x\neq 0\;{\rm and }\;\Vert x\Vert \le \delta(t), \\ \\
\frac{x}{\Vert x\Vert} \;\; \;\;{\rm if}\;\;\;\;\Vert x\Vert > \delta(t),\\ \\
0 \;\;\;\;\; \;\;\;{\rm if}\;\;\;\;\;x=0,
\end{array}\right.
\label{appr}
\end{equation}
for some $M, N>0$  and the  function $\delta(\cdot)$ is non-increasing. For example, we can choose $\delta(t)={\rm exp}(-k_1t-k_2)$ for some $k_1>0, k_2>0$.   
{It can be seen that the ${\rm Sign}_\delta(t,x)$ function is a continuous  function with respect to both time $t$ and state $x$. When the magnitude of $x$ is greater than $\delta(t)$, the function ${\rm Sign}_\delta(t,x)$ becomes equal to ${\rm Sign}(x)$. These properties make it effective in reducing the chattering effect while still handling uncertainty. In comparison, the norm of the sigmoid function $\frac{x}{\vert x \vert+\epsilon }$ is always less than $1$ which does not deal with the uncertainty entirely.  The sigmoid function only leads to convergence of the state to an approximate region of the sliding surface, as reported, e.g., in  \cite{Shtessel}. The suitable choice of the guiding function $\delta$ can remarkably reduce the chattering effect. In practice, one  can choose $\delta(t)=\max\{10^{-3},{\rm exp}(-k_1t-k_2)\}$ to avoid  very small values when $t$ is large. This can be seen in Example 1, Figures 3--7.}

 \section{Numerical Examples}
 
  In this section we provide some numerical examples to show the effectiveness of our approach. 
  \begin{example}\normalfont

{To test the effectiveness of our new approximation method, we will analyze a simple one-dimensional system: $\dot{x}\in \mu -L {\rm Sign}(x)$, where $\mu$ is an uncertainty with a constraint $\vert \mu\vert < L$. As an example, let us set $\mu=3\sin x$ and $L=4$. It is known that the state variable $x$ will converge to zero in a finite time. To evaluate the numerical simulations, we use the explicit scheme with an initial value of $x_0=0.1$ (see Figure 3).}
{The {\rm Sign} function  creates a chattering effect (as seen in the state $x_1$). However, by replacing the ${\rm Sign}$ function with ${\rm Sign}_\delta(t,x)$ defined in equation (\ref{appr}),  using a guiding function $\delta(t)=e^{-0.5t}$ and  $M=1, N=3$, the chattering effect can be eliminated and the convergence to zero is achieved (as seen in the state $x_3$). The performance of this approximation is better than using the sigmoid function with $\epsilon=10^{-3}$ (as seen in the state $x_2$).}
\begin{figure}[h!]
\begin{center}
\includegraphics[scale=0.49]{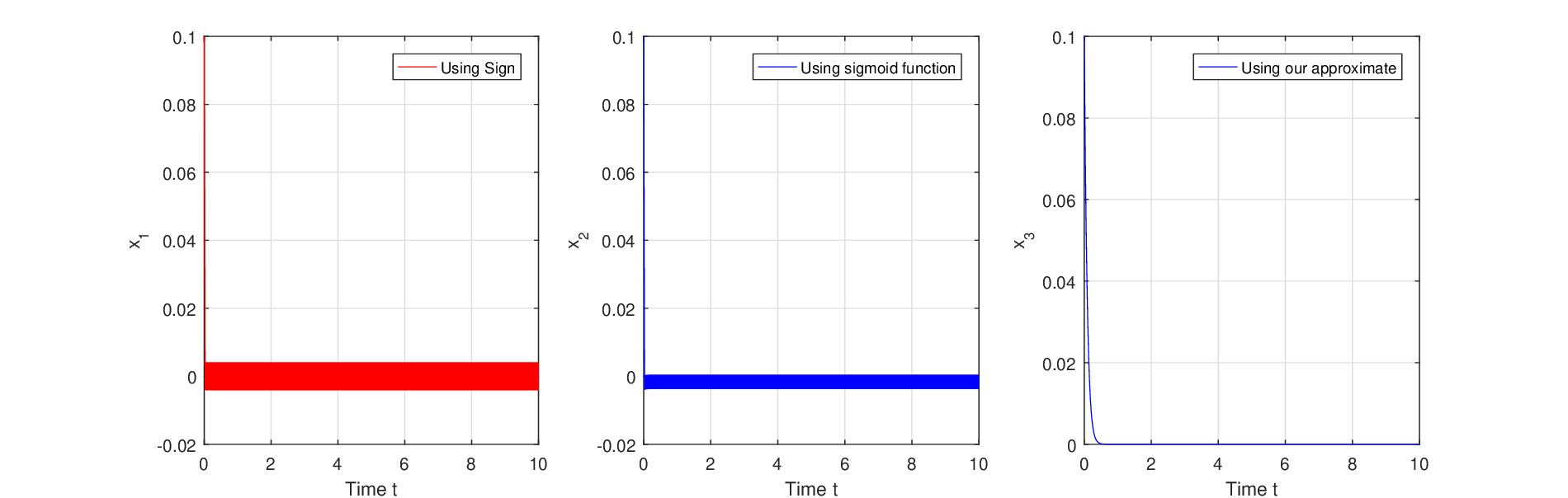}
\caption{An effective continuous approximation of Sign function in $\R$}
\end{center}
\label{luref}
\end{figure}

 \begin{figure}[h!]
\begin{center}
\includegraphics[scale=0.42]{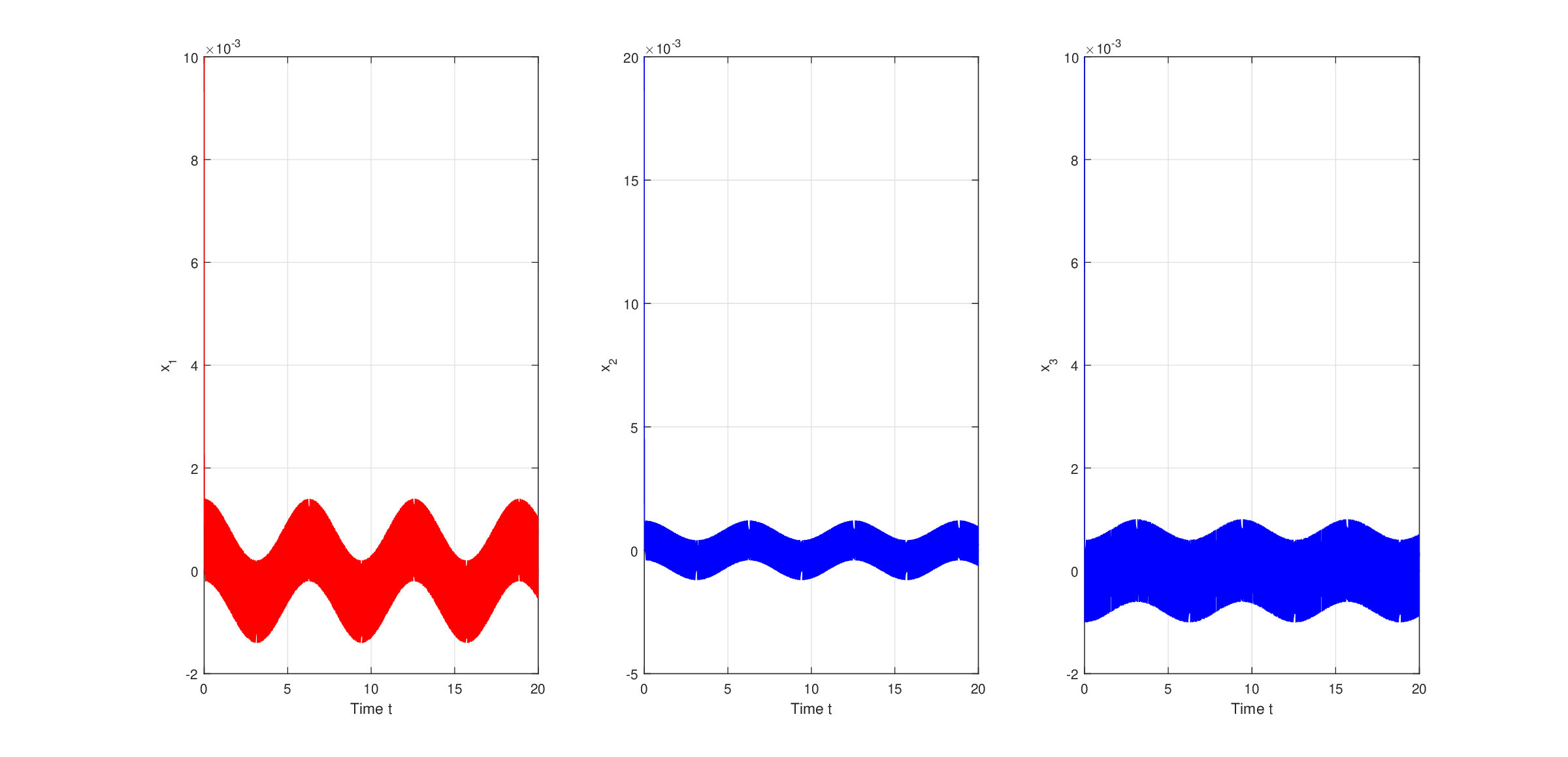}
\caption{The sytem state in $\R^3$ using sign function} 
\end{center}
\end{figure}

Similarly, we can consider the same system $\dot{x}\in \mu -Lu$ in $\R^3$ where the control $u$ can be ${\rm sign(x)}, \;{\rm Sign(x)}$ or certain continuous approximation. Let us consider  $\mu=(3 \;\;2\;\;-1)^T\cos t, L=40$ with the initial condition $x_0= (0.01\;\;0.02\;\;0.01)^T$. One can see in Figures 4 and 5 that the Sign function reduces chattering better than the sign function since  the Sign function is only discontinuous at zero. On the other hand,  ${\rm Sign}_\delta(t,x)$   has the best performance in reducing chattering (Figure 7). It confirms  that our time-dependent approximation  worths considering as a good alternative besides  existing fixed approximations and merits further investigations.  
 \begin{figure}[h!]
\begin{center}
\includegraphics[scale=0.42]{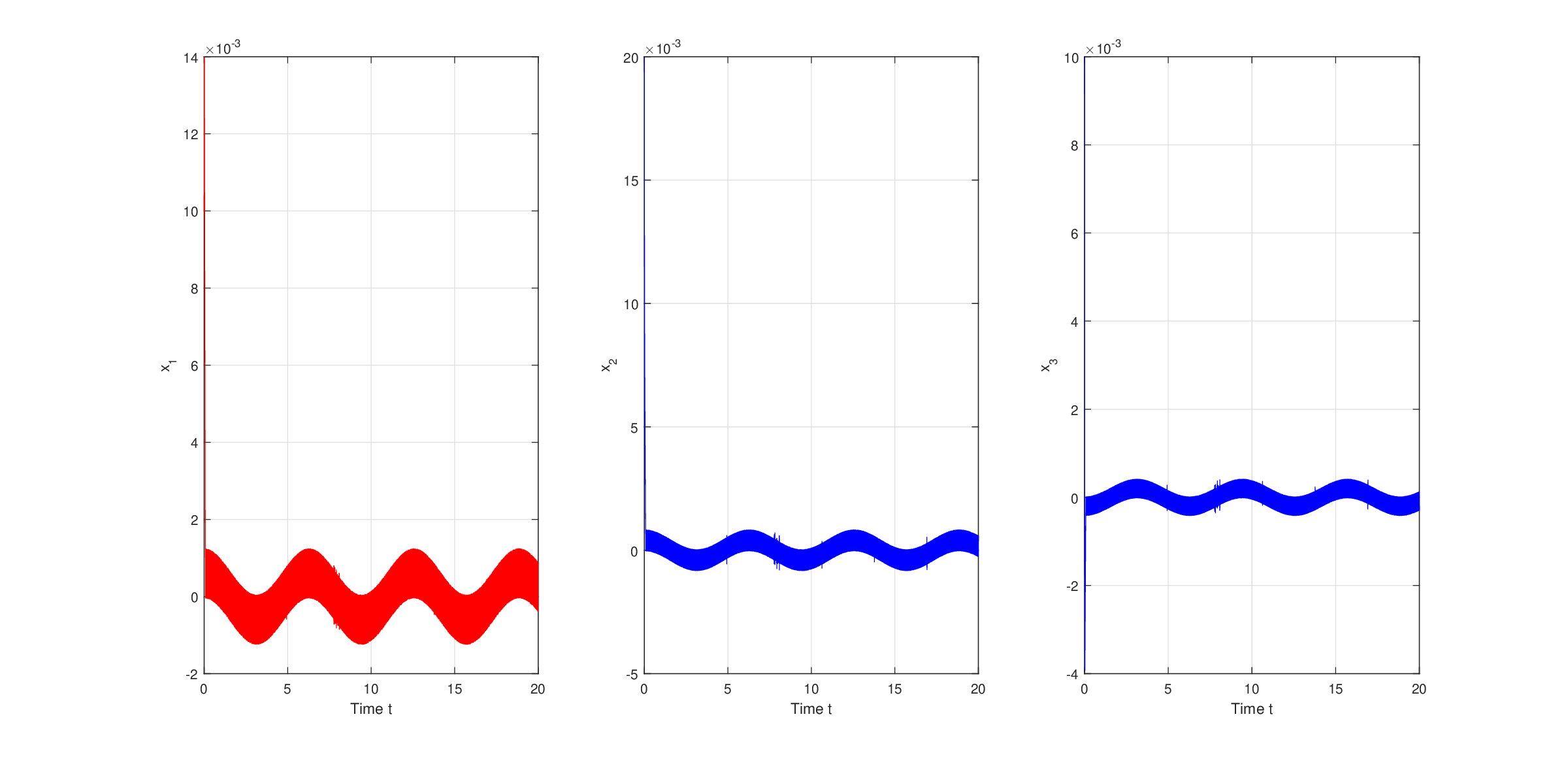}
\caption{The sytem state in $\R^3$ using Sign function} 
\end{center}
\end{figure}
 \begin{figure}[h!]
\begin{center}
\includegraphics[scale=0.42]{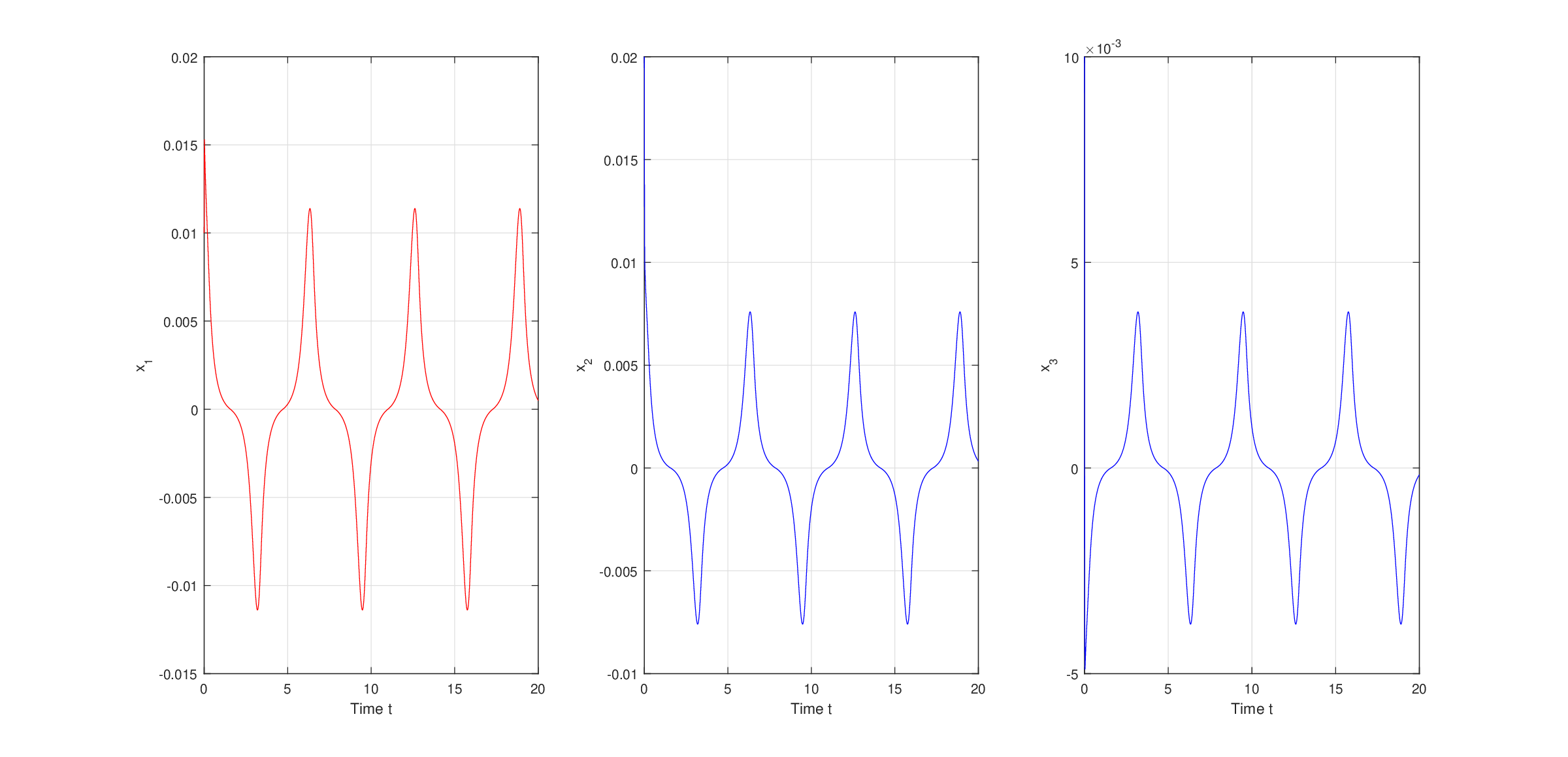}
\caption{The sytem state in $\R^3$ using sigmoid function with $\epsilon=10^{-3}$} 
\end{center}
\end{figure}
 \begin{figure}[h!]
\begin{center}
\includegraphics[scale=0.42]{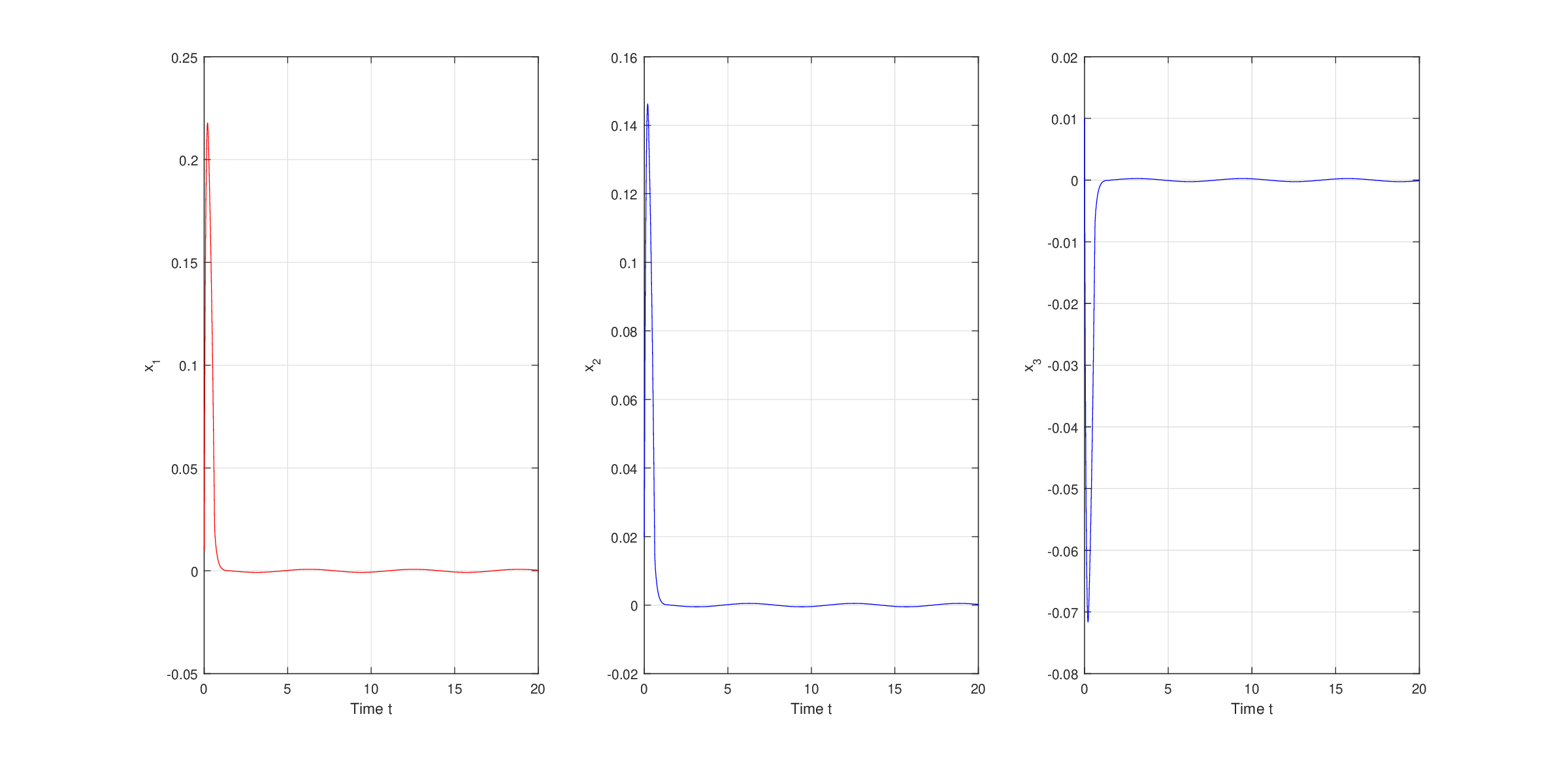}
\caption{The sytem state in $\R^3$ using ${\rm Sign}_\delta(t,x)$ with $\delta(t)=\max\{10^{-3},e^{-5.5t}\}$,  $M=1$, $N=3$} 
\end{center}
\end{figure}
\end{example}

 \begin{example}\normalfont
  Next we consider the system (\ref{sysh}) with 
  $$
  A=\left( \begin{array}{ccc}
-11 &\;\; 5&\;\; 0 \\ \\
9 &\;\; -10 &\;\;  0 \\ \\
0&\;\;0 &\;\;-11
\end{array} \right), B=\left( \begin{array}{ccc}
2  \\ \\
-3  \\ \\
4
\end{array} \right), f_1(x,u)=\left( \begin{array}{ccc}
3u+ 0.8\sin x_2  \\ \\
2u+ 0.9 \cos x_1 \\ \\
-u+0.8\sin x_3
\end{array} \right)
  $$
  
  $$
  f_2(x,u)=\left( \begin{array}{ccc}
3\sin x_2 \\ \\
0 \\ \\
0
\end{array} \right), C=\left( \begin{array}{ccc}
5 &\;\; -3&\;\; 4
\end{array} \right), F = \left( \begin{array}{ccc}
1 &\;\; 0&\;\; 0
\end{array} \right).
  $$
 Suppose that the unknown  $\theta=3 $, the control input $u=8\cos t$ and 
 $$
 \mathcal{F}(x) = \left\{
\begin{array}{lll}
{\rm sign}(x)(2\vert x \vert +5) & \mbox{ if } & x \neq 0,\\
&&\\
\; [ -5,5 ] & \mbox{ if } & x = 0.
\end{array}\right.
$$
The well-posedness of the original system and the sliding mode observer system follows (Theorem \ref{existence}) since we have $B^TQ=C$ where  
  $$
 Q=\left( \begin{array}{ccc}
5/2 &\;\; 0&\;\; 0 \\ \\
0 &\;\; 1 &\;\;  0 \\ \\
0&\;\;0 &\;\;1
\end{array} \right)
$$ 
Then $L_1=0.9, L_2=3, L_3=3$ and $\gamma=9.9$. Then Assumptions 1--4 are satisfied with 
  $$
 P=\left( \begin{array}{ccc}
1 &\;\; 0&\;\; 0 \\ \\
0 &\;\; 1 &\;\;  0 \\ \\
0&\;\;0 &\;\;1
\end{array} \right),\;\;\; L=\left( \begin{array}{ccc}
0  \\ \\
14 \\ \\
0
\end{array} \right), \;\;\;\epsilon = 0.2, \;\;\; K=3.
  $$
 We have a comparison between a standard Luenberger observer and the sliding mode observers (\ref{obs}), (\ref{obsn}) with $x_0=(3\;2\;1)^T, \;\tilde{x}_0=(15\;-20\;11)^T$. Both sliding mode observers has quite same performance  and converge faster than the Luenberger observer obviously (Figure 8).
  \begin{figure}[h!]
\begin{center}
\includegraphics[scale=0.63]{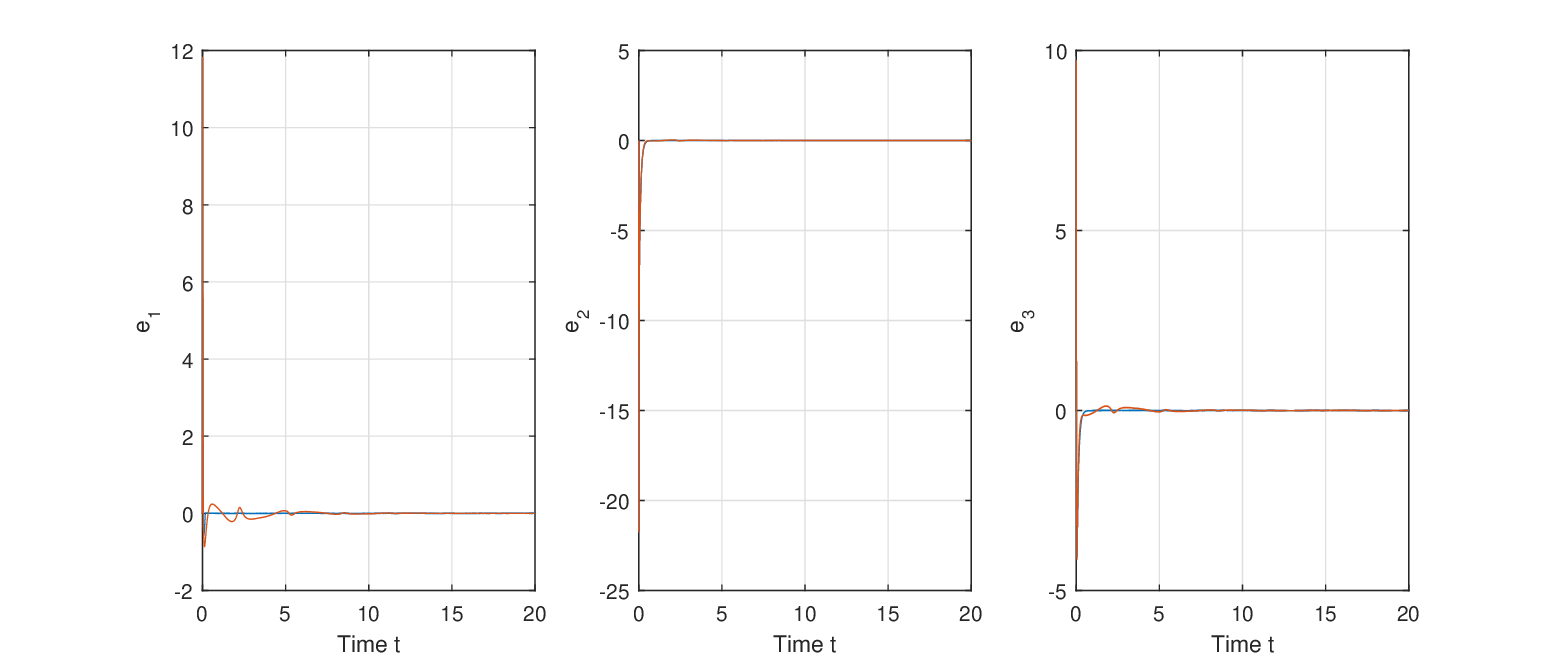}
\caption{Errors using a Luenberger observer (in red) and the sliding mode observer (\ref{obs}) (in blue)} 
\end{center}
\end{figure}
  \end{example}

  \begin{example}\normalfont
 Finally, we present an example to show the applicability of our approach, which
cannot be achieved using the Luenberger observer mentioned in \cite{Huang}. Specifically, we consider the
identical data as in Example 2, with the only difference being that\\
  $$
  A=\left( \begin{array}{ccc}
-1.1 &\;\; 5&\;\; 0 \\ \\
9 &\;\; -1 &\;\;  0 \\ \\
0&\;\;0 &\;\;-1.1
\end{array} \right)$$
and the unknown  $\theta=3\sin t $.
%
%

Let us assign the following values: $L_1=0.9, L_2=3, L_3=3$, $\gamma=9.9$ and $\gamma'=0.9$. It becomes
apparent that, for any matrix  $L$, it is impossible to satisfy the condition $A-LF\le-1.1I $. Basic
calculations show that Assumption 4 cannot be fulfilled, thereby rendering the application of the
Luenberger observer in \cite{Huang} unfeasible. However, by using the same values of  $P, L, \epsilon, K$ as in Example 2, it is straightforward to verify Assumptions 1, 2, 3, 4' of Theorem \ref{tm2}.  
 By using the sliding mode observer (\ref{obsn}) with the gain $\beta =10$, initializing the system with $x_0=(3\;2\;1)^T, \;\tilde{x}_0=(15\;27\;16)^T$ and employing an explicit scheme for $\mathcal{F}$,  Figure 9 shows the convergence of the error $e=\tilde{x}-{x}$ to zero. 
  \begin{figure}[h!]
\begin{center}
\includegraphics[scale=0.68]{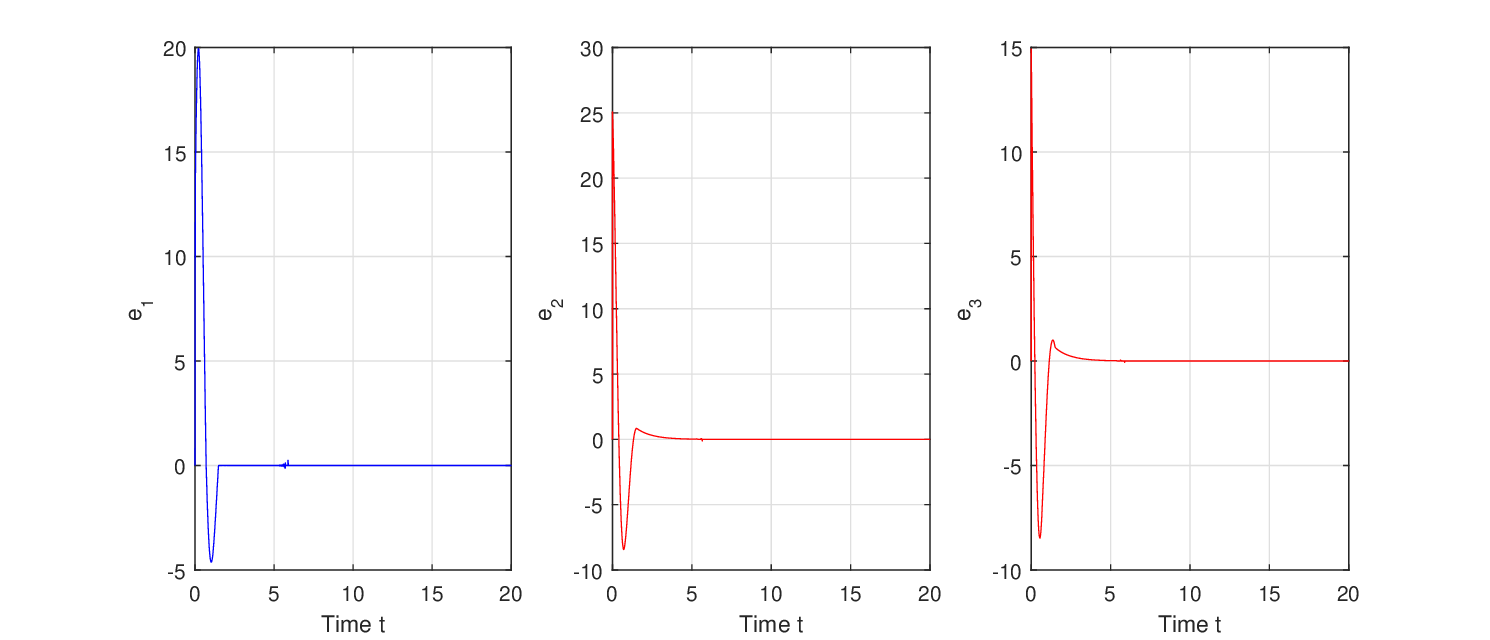}
\caption{The convergence of the sliding mode observer (\ref{obsn})} 
\end{center}
\label{luref}
\end{figure}

 Note that, Theorem \ref{reduce} can be also applied with $K=0, Q=I_2, \epsilon=0.2$.   The reduced-order observer (\ref{obseu}) becomes
\beq \left\{
\begin{array}{lll}
\; \dot{\tilde{z}}&=&\left( \begin{array}{ccc}
-1 &\;\;  0 \\ \\
0 &\;\;-1.1
\end{array} \right) \tilde{z} +\left( \begin{array}{ccc}
-3  \\ \\
4
\end{array} \right)\tilde{\omega}  +\left( \begin{array}{ccc}
9  \\ \\
0
\end{array} \right)x_1\\
 &+&(0 \;\;I_{2})f_1\Big(\left( \begin{array}{ccc}
x_1 \\ \\
\tilde{z} 
\end{array} \right),u\Big)\\
&&\\
\;\tilde{\omega} &\in& -\mathcal{F}(\left( \begin{array}{ccc}
 -3&\;\; 4
\end{array} \right)\tilde{z}+5x_1).
\end{array}\right. 
\eeq
{However, the numerical convergence of the reduced-order observer (\ref{obseu}) fails and {is explosive} in this sensitive case as it relies on an approximation of $x_1$ from the original system. Conversely, the reduced-order observer (\ref{obseu}) can be successfully applied to Example 2, primarily due to the highly negative definiteness of $A$.}
 \end{example} 
\section{Conclusion and perspectives} 
In this paper, the advantages of sliding mode observers are demonstrated for set-valued Lur'e dynamical systems that are faced with uncertainties. The robustness of this observer technique is a significant factor in the analysis and control of such systems. We also  present a new  continuous approximation of the sliding mode technique, which provides improved performance compared to conventional methods. However, {the traditional condition $(\ref{f2p})$} is a limiting factor in the applicability. Further research is needed to explore the possibility of relaxing or removing this condition to increase the range of systems that can be analyzed. This is an area that merits further investigations and has the potential to enhance the performance of sliding mode observers for set-valued Lur'e dynamical systems.\\

\noindent $\mathbf{Acknowledgement.}$
 The authors would like to show their gratitude to the handling Editor and the Reviewers for  their careful reading of our manuscript and for raising many interesting  questions, which helps us to improve the paper significantly.


\begin{thebibliography}{}

\bibitem{Acary}  { \sc V. Acary, O. Bonnefon, B. Brogliato},  \textit{Nonsmooth Modeling and
Simulation for Switched Circuits}, Lecture Notes in Electrical Engineering Vol 69. Springer Netherlands, 2011

\bibitem{ahl}  { \sc S. Adly, A. Hantoute, B. K. Le}, \textit{Nonsmooth Lur'e Dynamical Systems in Hilbert
Spaces}, Set-Valued Var Anal  24(1), 13--35, 2016

\bibitem{ahl2}  { \sc S. Adly, A. Hantoute, B. K. Le}, \textit{Maximal Monotonicity and Cyclic-Monotonicity Arising in Nonsmooth Lur'e Dynamical Systems"}, J Math Anal Appl 448(1), 691--706, 2017



\bibitem{Arcak}{\sc M. Arcak,  P. Kokotovic}, {\it Nonlinear observers: A circle criterion design and robustness analysis}. Automatica 37, 1923--1930, 2001

\bibitem{Ac}{\sc J. Aubin , A. Cellina},  {\it Differential Inclusions: Set-valued Maps and Viability Theory}. Berlin: Springer Verlag, 1984

\bibitem{AB}{\sc D. Azzam-Laouir, M. Benguessoum},  {\it Multi-Valued Perturbations to a Couple of Differential Inclusions Governed by Maximal Monotone Operators}, Filomat 35:13 (2021), 4369--4380.
\bibitem{Bartolini}{\sc  G. Bartolini, L. Fridman, A. Pisano, E. Usai}, editors.
{\it Modern Sliding Mode Control Theory: New Perspectives and Applications},  Lecture Notes in Control and Information Sciences Vol 375. Springer, 2008



\bibitem{bg} { \sc B. Brogliato}, {\it Absolute stability and the Lagrange-Dirichlet theorem with monotone multivalued mappings}, Syst Control Lett 51 (5), 343--353, 2004

\bibitem{bg2}{\sc B. Brogliato, D. Goeleven},  {\it Well-posedness, stability and invariance results for a class of multivalued Lur'e dynamical systems}, Nonlinear Anal Theory Methods Appl  74, 195--212, 2011

\bibitem{brogliato}{\sc B. Brogliato, R. Lozano, B. Maschke, O. Egeland}, {\it Dissipative Systems Analysis and Control}, Springer Nature Switzerland AG, 3rd Edition, 2020


\bibitem{bh}{\sc B. Brogliato, W. P. M. H. Heemels},  {\it Observer Design for Lur'e Systems With Multivalued Mappings: A Passivity Approach}, IEEE Trans Automat Contr 54(8), 1996--2001, 2009

\bibitem{bt} { \sc B. Brogliato, A. Tanwani}, {\it Dynamical Systems Coupled with Monotone Set-Valued Operators: Formalisms, Applications, Well-Posedness, and Stability}, SIAM Rev., 62(1), 3--129, 2020

\bibitem{br}{\sc H. Brezis}, {\it Op\'{e}rateurs Maximaux Monotones et Semi-groupes de Contractions dans
les Espaces de Hilbert}, Math. Studies 5, North-Holland American Elsevier, 1973

\bibitem{cs}{\sc M. K. Camlibel, J. M. Schumacher}, {\it Linear passive systems and maximal monotone mappings}, Math Program 157(2),  397--420, 2016


\bibitem{GST}{ {\sc R. Goebel, R. G. Sanfelice, A.R. Teel}.  \textit{Hybrid dynamical systems.
Modeling, stability, and robustness}. Princeton University Press, Princeton, NJ,  2012. xii+212 pp.} 



\bibitem{Huang}{\sc J. Huang , Z. Han , X. Cai , L. Liu},  {\it Adaptive full-order and reduced-order observers for the Lur'e differential inclusion system}, Commun Nonlinear Sci Numer Simul 16, 2869--2879, 2011

\bibitem{Huang1}{\sc  J. Huang, L. Yu, M. Zhang, F. Zhu, Z. Han},   {\it Actuator fault detection and estimation for the Lur'e differential inclusion system}, Appl. Math. Model 38, 2090--2100, 2014



\bibitem{Huang3}{\sc J. Huang, W. Zhang, M. Shi, L. Chen, L. Yu}, {\it $H^\infty$ Observer design for singular one-sided Lur'e differential inclusion system}, J Franklin Inst, 3305--3321, 2017



\bibitem{L1} {\sc B. K. Le}, {\it On a class of  Lur'e dynamical systems with state-dependent set-valued feedback}, Set-Valued Var Anal 28, 537--557, 2020

\bibitem{L2} {\sc B. K. Le}, {\it Well-posedness and nonsmooth Lyapunov pairs for state-dependent maximal monotone differential inclusions}, Optimization 69, 1187--1217, 2020

\bibitem{L3} {\sc B. K. Le}, {\it Sliding Mode Observers for Time-Dependent Set-Valued Lur'e Systems Subject to Uncertainties}, J Optim Theory Appl 194, 290--305, 2022


\bibitem{Lurie} {{\sc A.I. Lur'e, V.N. Postnikov}.\textit{ On the theory of stability of control systems}. Appl. Math. Mech.
8(3), (1944) (in Russian).}

%

 \bibitem{Orlov}{\sc Y. Orlov},  {\it Nonsmooth Lyapunov Analysis in Finite and Infinite Dimensions}, Springer Nature Switzerland AG 2020
 
\bibitem{Shtessel}{\sc Y. Shtessel, C. Edwards, L. Fridman and A. Levant}, {\it Sliding Mode Control and Observation}, Birkhauser, 2013

\bibitem{Shokouhi}{\sc Shokouhi, Farbood, and Davaie Markazi, Amir-Hossein}, {\it A new continuous approximation of sign function for sliding mode control}, International Conference on Robotics and Mechantronics (ICRoM 2018). Tehran. Iran.2018

\bibitem{Spurgeon}{\sc S. Spurgeon},  {\it Sliding mode observers - a survey}. Int. J. Syst. Sci. 39(8), 751--764, 2008

\bibitem{Tan}{\sc C. Tan, C. Edwards}, {\it Sliding mode observers for robust detection and reconstruction of actuator and sensor faults}, Int. J. Robust Nonlinear Control. 13, 443--463, 2003


 \bibitem{abc0} {\sc A. Tanwani, B. Brogliato, C. Prieur},  {\it Stability and observer design for Lur'e systems with
multivalued, non-monotone, time-varying
nonlinearities and state jumps}, SIAM J. Control Opti., Vol. 52, No. 6, 3639--3672, 2014
 
 \bibitem{Wz}{\sc B. L. Walcott, S. H. Zak},
 {\it State observation of nonlinear uncertain dynamical systems}.  IEEE Trans. Automat. Contr. 32(2), 166--170, 1987
 
 
 






\bibitem{Xiang}{\sc J. Xiang, H. Su,  J. Chu}: {\it On the design of Walcott-Zak sliding mode observer}, Proc. Amer. Contr. Conf. Portland, OR, pp. 2451--2456, 2005
\end{thebibliography}
\end{document}